\newtheorem{thm}{Theorem}[section]
\newtheorem{cj}{Conjecture}[section]
\newtheorem{defi}[thm]{Definition}
\newtheorem{lem}[thm]{Lemma}
\newtheorem{cor}[thm]{Corollary}
\newtheorem{pro}[thm]{Proposition}
\def\bb{\mathbb}
\def\ca{\mathcal}
\def\fr{\mathfrak}
\def\bitm{\begin{itemize}} \def\eitm{\end{itemize}}
\def\benu{\begin{enumerate}} \def\eenu{\end{enumerate}}
\def\bpf{\begin{proof}}\def\epf{\end{proof}}
\def\beq{\begin{equation}}\def\eeq{\end{equation}}
\def\beqs{\begin{eqnarray}}\def\eeqs{\end{eqnarray}}
\def\beqsnl{\begin{eqnarray*}}\def\eeqsnl{\end{eqnarray*}}
\begin{document}

\title{\textbf{
Restriction Theorems on M\'{e}tiver Groups Associated to Joint Functional Calculus
}\footnote{Date: Jun. 5, 2013. Last revised on Oct. 25, 2014. }}
\author{Heping Liu, An Zhang \footnote{Corresponding author. The authors are supported by National Natural Science Foundation of China under Grant \#11371036 and the Specialized
Research Fund for the Doctoral Program of Higher Education of China under Grant \#2012000110059. Contact Information: Department of Mathematics, School of Mathematical Science, Peking University, No.5 Yiheyuan St, Haidian, Beijing, 100871, China. E-mails: \texttt{hpliu@math.pku.edu.cn} (H. Liu), \texttt{anzhang@pku.edu.cn} (A. Zhang, for Correspondence).}}
\date{}
\maketitle

\begin{abstract}
 In this article, we get on M\'{e}tivier groups the spectral resolution of a class of operators $m(\mathcal{L}, -\Delta_\fr{z})$, the joint functional calculus of the sub-Laplacian and Laplacian on the center. Then, we give some restriction theorems, asserting the mix-norm boundness of the spectral projection operators $\mathcal{P}_{\mu}^{m}$ for two classes of functions $m(a,b)= (a^\alpha+b^\beta)^\gamma$ or $(1+a^\alpha+b^\beta)^\gamma$, with $\alpha, \beta>0, \gamma\neq0$.
\end{abstract}

\section{Introduction}\label{introduction}
In this paper, we extend the mix-norm boundness obtained by V. Casarino and P. Ciatti \cite{cc12} to general two classes of projection operators (we call \emph{projector} for simplicity) on M\'{e}tivier groups (a class of 2-step nilpotent Lie group, first defined and studied by G. M\'{e}tivier in \cite{metivier80}). M\'{e}tivier group class is strictly more general than the H-type group class introduced by A. Kaplan \cite{kaplan80}, with the Heisenberg group being the \emph{only} special one (of H-type) with $1$-dimensional center.

The ``restriction-type" operator (spectral projector) we study acts on the central variables by the Euclidean Fourier transform while acting on the ``space-$v$" variables by the spectral projection of the twisted Laplacian. As the quotient of a M\'{e}tiver group corresponding to the hyperplanes in the center is isomorphic to the Heisenberg group, we can use the spectral projector on the Heisenberg group to estimate that on the M\'{e}tivier group by a ``partial" Radon transform. For the Fourier transform on the central variables, we use the famous Tomas-Stein theorem. Our result includes not only \emph{homogeneous} (like $\mathcal{L}^2-\Delta_\fr{z}$) but also \emph{inhomogeneous} operators (like full Laplacian $\Delta_G=\mathcal{L}-\Delta_\fr{z}$), and also cover the uniform-norm boundness $L^p\rightarrow L^{p'}$ with  exponents in the corresponding range.

First we recall some histories. The restriction problem on $\mathbb{R}^n$, denoted by $R_S(p\rightarrow q)$, cares the $L^p(\mathbb{R}^n)\rightarrow L^q(S)$  boundness of the Fourier transform for any hypersurface $S$ with boundary endowed with the Lebesgue surface measure $d\sigma$. It has many useful applications in both harmonic analysis and PDE. The restriction probolem with respect to the unit sphere (more generally for any compact hypersurface with boundary and non-vanishing Gaussian curvature everywhere) is given in the following conjecture.
\begin{cj}\label{c1}\emph{[Stein's Restriction Conjecture]}
\[R_{\bb{S}^{n-1}}(p\rightarrow q) \mbox{~ holds if and only if~~ }p <\frac{2n}{n+1} \text{ and~ } q\le \frac{n-1}{n+1} p'.\]
\end{cj}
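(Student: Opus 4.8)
The plan must begin with a caveat: Conjecture~\ref{c1} is one of the central open problems of harmonic analysis, and its ``if'' (sufficiency) direction is unknown for $n\ge 3$. So I cannot propose a complete proof; what I can do rigorously is the ``only if'' (necessity) direction, which shows the stated range of exponents is sharp, and then outline the strategy and the state of the art for sufficiency. Throughout I pass to the dual \emph{extension} formulation: $R_{\mathbb{S}^{n-1}}(p\to q)$ is equivalent to the estimate $\|\widehat{g\,d\sigma}\|_{L^{p'}(\mathbb{R}^n)}\lesssim \|g\|_{L^{q'}(d\sigma)}$, which is more convenient for testing against explicit functions.

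For necessity I would use two test families. First, taking $g\equiv 1$ reduces the extension bound to $\|\widehat{d\sigma}\|_{L^{p'}}\lesssim \sigma(\mathbb{S}^{n-1})^{1/q'}<\infty$; since the stationary-phase asymptotics give $|\widehat{d\sigma}(\xi)|\sim |\xi|^{-(n-1)/2}$, this integral converges exactly when $p'(n-1)/2>n$, i.e.\ when $p<\frac{2n}{n+1}$, forcing the first (strict) condition. Second, I would run the Knapp example: let $g=\mathbf{1}_{C_\delta}$ be the indicator of a cap of angular radius $\delta$, so that $\sigma(C_\delta)\sim\delta^{n-1}$ and the Fourier support sits in a box with $n-1$ tangential sides $\sim\delta$ and normal thickness $\sim\delta^2$. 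By the uncertainty principle $\widehat{g\,d\sigma}$ is essentially constant, of size $\sim\delta^{n-1}$, on a dual slab $T_\delta$ of dimensions $\delta^{-1}\times\cdots\times\delta^{-1}\times\delta^{-2}$, whence $\|\widehat{g\,d\sigma}\|_{L^{p'}}\gtrsim \delta^{(n-1)-(n+1)/p'}$ while $\|g\|_{L^{q'}(d\sigma)}=\delta^{(n-1)/q'}$. Letting $\delta\to0^+$ and comparing exponents yields $(n-1)-\frac{n+1}{p'}\ge\frac{n-1}{q'}$, which rearranges to $q\le\frac{n-1}{n+1}\,p'$, the second condition. This establishes that both inequalities are genuinely necessary.

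For sufficiency I would again dualize and try to prove the extension estimate on the full closed-below range. The secure part is the endpoint on the line $q=2$: the Tomas--Stein theorem (which the paper invokes) gives precisely $R_{\mathbb{S}^{n-1}}\!\big(\tfrac{2(n+1)}{n+3}\to 2\big)$, and interpolating this with the trivial $R_{\mathbb{S}^{n-1}}(1\to\infty)$ bound covers a subregion of the conjectured one. The remaining, genuinely hard region is where $q$ is small and $p'$ approaches the critical exponent $\frac{2n}{n-1}$; here the modern program is to decompose $\mathbb{S}^{n-1}$ into caps, split the extension into a ``broad'' part (controlled by the multilinear restriction theorem of Bennett--Carbery--Tao) and a ``narrow'' part, and iterate through an induction on scales, sharpened by Guth's polynomial partitioning. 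The main obstacle is exactly this last step: no known argument closes the quantitative gap between the available multilinear/bilinear bounds and the linear conjecture for $n\ge3$, so the full statement remains open and only the necessity half can be proved here. (In the companion work the relevant use of Conjecture~\ref{c1} is confined to the $L^2$-based Tomas--Stein endpoint, which does not require resolving the open part.)
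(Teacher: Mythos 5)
The statement you were asked to prove is labelled a \emph{conjecture} in the paper, and the surrounding text explicitly declares it open (``BIG OPEN!''); the paper offers no proof of it and only ever uses the special case $q=2$, i.e.\ the Tomas--Stein theorem, in its actual arguments. So there is no proof of the paper's to compare yours against, and your decision to prove only the necessity half while flagging sufficiency as open is exactly the right call. Your necessity argument is the standard and correct one: the dualization to the extension estimate, the $g\equiv 1$ test with the stationary-phase decay $|\widehat{d\sigma}(\xi)|\sim|\xi|^{-(n-1)/2}$ forcing $p'(n-1)/2>n$, i.e.\ $p<\frac{2n}{n+1}$, and the Knapp cap giving $\delta^{(n-1)-(n+1)/p'}\lesssim\delta^{(n-1)/q'}$ as $\delta\to 0^+$, which rearranges to $q\le\frac{n-1}{n+1}p'$. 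The exponent bookkeeping in both computations checks out, and your summary of the sufficiency landscape (Tomas--Stein at $q=2$, the open region near $p'=\frac{2n}{n-1}$, multilinear and polynomial-partitioning methods) accurately matches what the paper's introduction reports. No gap on your side beyond the one inherent in the problem itself.
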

The conjecture has been proved for $q=2$ or $n=2$ by P. Tomas, E. Stein and C. Fefferman. For $n=3$, J. Bourgain an L. Guth have recently gotten the best result so far in \cite{bg11}, where they proved the dual extension theorem $E_{S^{n-1}}(\infty\rightarrow p')$ for $p'>3\frac{3}{10}$. The authors used the method of multilinear theory from \cite{bct06} together with the Kakeya maximal estimate due to T. Wolff \cite{wolff95improvedboundforkakeya} and improved a bit an old result of T. Tao (Tao's bilinear approach gives $p'>3\frac{1}{3}$). In a word, some exciting progresses have been made while the whole picture is still far from known. BIG OPEN!

Recalling the famous Tomas-Stein Theorem, due to P. Tomas and E. Stein \cite{tomas75} (Stein's result for the endpoint is unpublished), they proved Conjecture \ref{c1} for $q=2$. It corresponds to the $L^p\rightarrow L^{p'}$ boundness of the convolution operator $f*\widehat{d\sigma_\lambda}$, which is just the spectral projector of the positive Laplacian $-\Delta$ whose symbol is  $|\xi|^2$. In \cite{str89}, R. Strichartz study analogues of this in other settings from a viewpoint of harmonic analysis as spectral theory of sub-Laplacians. Motivated by this idea, D. M\"{u}ller \cite{mul90} proved a mix-norm\footnote{We will use the same mix-norm through this paper: the $(p,q)$-type mix-norm of a function $f$ on the measure space $X\times Y$ is defined by $\|f\|_{L^p_xL^q_y}=\Big(\int_Y\big(\int_X |f(x,y)|^p dx\big)^{\frac{q}{p}}dy\Big)^{\frac{1}{q}}$, which  is reduced to the $L^p$ norm $\|f\|_{L^p(X\times Y)}$ when $p=q$.} boundness $L^\infty_t L^p_z\rightarrow L^1_t L^{p'}_z$ of the restriction operator associated to the sub-Laplacian on the Heisenberg group, using a bound of the spectral projector of the twisted Laplacian $\|\Lambda_k g\|_{L^{p'}} \lesssim k^{(n-1)(1-\frac{2}{p'})}\|g\|_{L^p}$. The exponent on the center is trivial because of the trivial $1$-dimensional Tomas-Stein theorem. So when center dimension is bigger than one, it's reasonable to get restriction theorems for exponents of wider range. See \cite{tha91,tha98,lw11,ls13} for related results. In \cite{cc12}, Casarino and Ciatti used an improved sharp bound of $\|\Lambda_k\|_{L^p\rightarrow L^q}$ to get a greatly improved mix-norm bound for the restriction operators associated to the sub-Laplacian and full Laplacian on the Heisenberg group and simultaneously an analogue for sub-Laplacian on the M\'{e}tivier group. More precisely, on a M\'{e}tivir group with dimension $2n+d$, where $d$ is the dimension of the center, we have the following theorem of Casarino and Ciatti about the restriction operator $\ca{P}_\lambda^\ca{L}$ associtaed to the sub-Laplacian $\ca{L}$ (details will be explained later).
\begin{thm}
Given $1\le p\le 2\le q\le 2, 1\le r\le 2\frac{d+1}{d+3}$, we have \[\|\mathcal{P}_\mu^{\mathcal{L}}f\|_{L^{r'}_zL^q_v} \le C_\mu \|f\|_{L^r_zL^p_v},\] with \[C_\mu \lesssim \mu^{n(\frac{1}{p}-\frac{1}{q})+d(\frac{1}{r}-\frac{1}{r'})-1}.\]
\end{thm}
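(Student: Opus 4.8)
The plan is to diagonalize $\ca{L}$ along the center and then separate the two variables. First I would take the Euclidean Fourier transform in the central variable $z\mapsto\lambda$, turning $\ca{L}$ into the family of twisted (M\'{e}tivier) Laplacians $\ca{L}_\lambda$ acting on $L^2(\bb{R}^{2n}_v)$. By the joint spectral resolution established above, each $\ca{L}_\lambda$ has discrete spectrum $(2k+n)|\lambda|$ with eigenprojections $\Lambda_k^\lambda$, so that the projector splits as $\ca{P}_\mu^{\ca{L}}=\sum_k\ca{P}_\mu^{\ca{L},k}$, where the $k$-th piece is supported on the frequency sphere $|\lambda|=R_k:=\mu/(2k+n)$. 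Passing the delta of the spectral measure to polar coordinates, $\ca{P}_\mu^{\ca{L},k}$ acts in $z$ as $(2k+n)^{-1}$ times convolution against $\widehat{d\sigma_{R_k}}$, the Fourier transform of surface measure on $|\lambda|=R_k$, with the projection $\Lambda_k^{R_k\omega}$ inserted in the $v$-variable for each direction $\omega\in\bb{S}^{d-1}$.

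For the central direction I would invoke the Tomas--Stein theorem in its radius-$R_k$ form: convolution with $\widehat{d\sigma_{R_k}}$ maps $L^r_z\to L^{r'}_z$ with norm $\lesssim R_k^{d(\frac1r-\frac1{r'})-1}$ precisely when $r\le 2\frac{d+1}{d+3}$, which is exactly where the stated range of $r$ is forced. For the space direction I would use the scaling relation $\|\Lambda_k^\lambda\|_{L^p_v\to L^q_v}=|\lambda|^{n(\frac1p-\frac1q)}\|\Lambda_k\|_{L^p_v\to L^q_v}$, obtained from the dilation $v\mapsto|\lambda|^{1/2}v$ applied to the projection kernel, combined with Casarino--Ciatti's sharp bound for the twisted-Laplacian projection $\|\Lambda_k\|_{L^p\to L^q}$. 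Combining the two directions for the $k$-th piece and substituting $R_k=\mu/(2k+n)$ produces a bound of the form $\mu^{n(\frac1p-\frac1q)+d(\frac1r-\frac1{r'})-1}\,(2k+n)^{-d(\frac1r-\frac1{r'})-n(\frac1p-\frac1q)}\|\Lambda_k\|_{L^p\to L^q}$, whose $\mu$-power is exactly the claimed exponent and is independent of $k$. Summing over $k$, the remaining series converges in the indicated range of exponents --- and it is here that the improved, sharp projection bound, rather than M\"{u}ller's cruder one, is essential --- producing the constant $C_\mu$.

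The main obstacle is that $\ca{P}_\mu^{\ca{L},k}$ is not a genuine tensor product: the inserted projection $\Lambda_k^{R_k\omega}$ depends on the direction $\omega$ as it ranges over the sphere, so one cannot simply multiply the two operator norms. I would resolve this by running an operator-valued Tomas--Stein argument, in which the scalar kernel $\widehat{d\sigma_{R_k}}$ is replaced by the $\ca{B}(L^p_v,L^q_v)$-valued kernel carrying $\Lambda_k^{R_k\omega}$; the two ingredients of the $TT^*$/dyadic proof, namely the $L^2_zL^2_v$ orthogonality (from Plancherel together with $\|\Lambda_k^\lambda\|_{L^2\to L^2}=1$) and the stationary-phase decay of the oscillatory kernel, both persist in the operator-valued setting, provided the $v$-bound is uniform in $\omega$. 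Establishing this uniformity is the genuinely M\'{e}tivier difficulty: unlike the Heisenberg case, the skew form $J_\omega$ has $\omega$-dependent eigenvalues. I would handle it exactly as the ``partial Radon transform'' reduction suggests, namely: for each $\omega$ the quotient of $G$ by the hyperplane $\omega^\perp$ in the center is a Heisenberg-type group on which the projection bound is already available, and since $J_\omega$ is nonsingular for all $\omega\neq0$ while $\bb{S}^{d-1}$ is compact, its eigenvalues stay bounded away from $0$ and $\infty$. This yields the required uniform control over the directions and closes the estimate.
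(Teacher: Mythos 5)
Your skeleton is the right one --- decompose over the Laguerre index $k$, place each piece on the sphere $|\lambda|=\mu/(2k+n)$ in the central frequency variable, use Tomas--Stein there and the sharp twisted-projection bound in $v$, and sum the resulting series in $k$ --- and your bookkeeping of the $\mu$- and $(2k+n)$-powers agrees with what the paper obtains. But the step you correctly single out as the main obstacle, the $\omega$-dependence of the inserted projection $\Lambda_k^{R_k\omega}$, is not actually closed by your proposed operator-valued Tomas--Stein argument. In the $TT^*$ proof of Tomas--Stein the two ingredients are $L^2$ orthogonality \emph{and} the decay $|\widehat{a\,d\sigma}(z)|\lesssim |z|^{-(d-1)/2}$ of the oscillatory kernel; the latter comes from stationary phase and requires smoothness of the density $a(\omega)$ with controlled derivatives, not merely uniform boundedness. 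With the operator-valued density $\omega\mapsto \Lambda_k^{R_k\omega}$, the $\omega$-derivatives act on the special Hermite kernel $L_k^{n-1}(\tfrac{\lambda}{2}|A_\omega z|^2)e^{-\frac{\lambda}{4}|A_\omega z|^2}$ and grow with $k$, so even granting smoothness the stationary-phase constant is not uniform in $k$ and threatens exactly the summability you need. ``Uniform in $\omega$'' is therefore not sufficient, and this is a genuine gap, not a technicality.

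The paper sidesteps the issue entirely and you should too: rather than an operator-valued restriction theorem, test $\mathcal{P}_\mu^{\mathcal{L}}f$ against a dual function $g$. After H\"older in $v$ one pulls out $\sup_{\omega}\|\Pi_k^{\mu_k\omega}\|_{L^p\to L^q}$ (uniform by the non-degeneracy/compactness argument you already have, $|A_\omega|\sim 1$), and what remains is $\int_{S^{d-1}}\|\mathfrak{F}_{\mathfrak z}f(\cdot,\mu_k\omega)\|_{L^p_v}\,\|\mathfrak{F}_{\mathfrak z}g(\cdot,\mu_k\omega)\|_{L^{q'}_v}\,d\sigma(\omega)$. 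Cauchy--Schwarz on the sphere followed by Minkowski (using $p,q'\le 2$) reduces this to two applications of the \emph{scalar} Tomas--Stein restriction estimate $\|\widehat{h}\,|_{S^{d-1}}\|_{L^2(d\sigma)}\lesssim\|h\|_{L^r}$, one for $f$ and one for $g$, for which only the uniform operator norm of the projections is needed. Two smaller points: the sharp bound $\|\Lambda_k\|_{L^p\to L^q}\lesssim (2k+n)^{\phi(\frac1p-\frac12)+\phi(\frac12-\frac1q)}$ is Koch--Ricci's $L^p\to L^2$ estimate combined with $\Lambda_k=\Lambda_k^*\Lambda_k$ and duality (Casarino--Ciatti use it but it is not theirs), and you should actually verify the convergence of $\sum_k(2k+n)^{\nu}$: one finds $\nu<-1$ in all cases except $(d,p,q,r)=(1,2,2,1)$, where $\nu=-1$ and the series diverges, which is why the paper's theorems exclude $(d,p,q)=(1,2,2)$.
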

In \cite{ls13}, the first author advised to compute some joint functional calculus of the Laplacians including the full Laplacian, first done by M. Song, on groups of H-type. Combining the two works, we obtain the mix-norm bound in \cite{cc12} for the joint functional calculus in \cite{ls13}. Our main result is the following theorem about two classes of operators $m(\ca{L},-\Delta_{\fr{z}})$ from the joint functional calculus  of the sub-Laplacian $\ca{L}$ and Laplacian on the center $-\Delta_{\fr{z}}$.
\begin{thm}
Given $\alpha,\beta>0, \gamma\neq0, 1\le p\le 2\le q\le \infty, (d,p,q)\neq (1,2,2), 1\le r\le 2\frac{d+1}{d+3}$, $m_1(a,b)=(a^\alpha+b^\beta)^\gamma, m_2(a,b)=(1+a^\alpha+b^\beta)^\gamma$, then we have \[\|\mathcal{P}_\mu^{m_i} f\|_{L^{r'}_zL^q_v}\le C_\mu^{m_i} \|f\|_{L^r_zL^p_v},\quad i=1,2,\] with
\[C_\mu^{m_1}\lesssim
\left\{\begin{array}{cc}
   \mu^B \qquad & \mu^{\frac{1}{\gamma}(\frac{1}{\alpha}-\frac{1}{2\beta})}\leq 1,\\
   \mu^A \qquad & \mu^{\frac{1}{\gamma}(\frac{1}{\alpha}-\frac{1}{2\beta})}> 1.
\end{array}\right.\]
and
\[C_\mu^{m_2}\lesssim
\left\{\begin{array}{cll}
    \mu^B \qquad & \mu\rightarrow \infty^{sign\gamma}, &\mu^{\frac{1}{\gamma}(\frac{1}{\alpha}-\frac{1}{2\beta})}\leq 1,\\
    \mu^A \qquad & \mu\rightarrow \infty^{sign\gamma}, &\mu^{\frac{1}{\gamma}(\frac{1}{\alpha}-\frac{1}{2\beta})}> 1, \\
    |1-\mu|^D \qquad & \mu\rightarrow e^{\frac{sign\gamma}{\infty}}, &|1-\mu|^{\frac{1}{\alpha}-\frac{1}{2\beta}}\leq 1,\\
    |1-\mu|^C \qquad & \mu\rightarrow e^{\frac{sign\gamma}{\infty}}, &|1-\mu|^{\frac{1}{\alpha}-\frac{1}{2\beta}}> 1.
\end{array}\right.\] Details about the definition of restriction operator $\ca{P}_\mu^{m}$ and the constants $A,B,C,D$ dependent of $n,d, p,q,r,\alpha,\beta,\gamma$ will be given later by \emph{(\ref{restriction operator formula})} and \emph{(\ref{exponent})} in subsection \ref{main results}.

\end{thm}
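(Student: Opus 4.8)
The plan is to assemble $\mathcal{P}_\mu^{m_i}$ out of the two elementary estimates already available and then to sum the resulting bounds over the joint spectrum. First I would record the joint spectral resolution of $m(\mathcal{L}, -\Delta_\fr{z})$. Taking the Euclidean Fourier transform in the central variable $z$ turns $-\Delta_\fr{z}$ into multiplication by $|\lambda|^2$ and turns $\mathcal{L}$ into a scaled twisted Laplacian whose eigenvalues are $(2k+n)|\lambda|$, $k\in\mathbb{N}$. Hence the joint spectrum is parametrized by $(k,\lambda)$ with $\mathcal{L}\mapsto a=(2k+n)|\lambda|$ and $-\Delta_\fr{z}\mapsto b=|\lambda|^2$, so that $\mathcal{P}_\mu^{m}$ is the density at level $\mu$ of the joint spectral measure, i.e.\ a sum over $k$ of an integral over $\lambda$ against $\delta\big(m((2k+n)|\lambda|,|\lambda|^2)-\mu\big)$. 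This is the content of the formula (\ref{restriction operator formula}) promised in the statement.

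Next I would reduce each summand to a product of two model operators and apply the known inputs. Since $m$ depends on $\lambda$ only through $|\lambda|$, for each Hermite level $k$ the delta function pins the radius $|\lambda|$ to a single value $\rho_k=\rho_k(\mu)$ solving $m((2k+n)\rho_k,\rho_k^2)=\mu$, collapsing the $d$-dimensional integral to a surface measure on the central sphere of radius $\rho_k$, with a Jacobian weight $c_k(\mu)=|\partial_{|\lambda|}m|^{-1}$ evaluated there. The building block then factors as Fourier restriction to that sphere, tensored with the scaled twisted-Laplacian projector $\Lambda_k$ in the $v$-variables. To the central factor I would apply the Tomas-Stein theorem, which gives the $L^r_z\to L^{r'}_z$ bound in the stated range $1\le r\le 2\frac{d+1}{d+3}$ together with a power of $\rho_k$; to the $v$-factor I would apply the sharp bound for $\Lambda_k$ underlying the Casarino-Ciatti theorem recalled above, contributing powers of $k$ and of $\rho_k$. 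Equivalently, these two steps may be packaged as a single application of the Casarino-Ciatti estimate for $\mathcal{P}^{\mathcal{L}}$ read off along the level curve.

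Summing the products $c_k(\mu)\cdot\|\text{Tomas-Stein}\|\cdot\|\Lambda_k\|$ over $k$ produces $C_\mu^{m_i}$, and I expect this summation to be the main obstacle. The terms grow polynomially in $k$ while $\rho_k(\mu)$ decreases, so the sum is governed by a dominant balance between the $a^\alpha$ and $b^\beta$ contributions along the level curve $a^\alpha+b^\beta=\mu^{1/\gamma}$; the two regimes of the conclusion correspond to which end of the curve carries the mass. The quantity $\frac{1}{\gamma}\big(\frac{1}{\alpha}-\frac{1}{2\beta}\big)$ is exactly the exponent measuring this competition, and it vanishes precisely when $\alpha=2\beta$, the case in which $m_1$ is homogeneous under the natural parabolic dilations $\delta_t:(v,z)\mapsto(tv,t^2z)$ and no dichotomy occurs. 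Executing this balance carefully yields the exponents $A$ and $B$ of (\ref{exponent}).

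Finally, for the inhomogeneous symbol $m_2$ the constant $1$ shifts the level curve to $a^\alpha+b^\beta=\mu^{1/\gamma}-1$, so no exact scaling is available and the two limits must be treated separately. As $\mu\to\infty^{\mathrm{sign}\,\gamma}$ the curve is far from the origin and the analysis reproduces the homogeneous regimes, giving $A$ and $B$; as $\mu\to1$ the level $\mu^{1/\gamma}-1\to0$, the sum localizes near the bottom of the joint spectrum, and the relevant small parameter becomes $|1-\mu|$, producing $C$ and $D$ through the same dominant-balance argument applied according to whether $|1-\mu|^{\frac{1}{\alpha}-\frac{1}{2\beta}}\lessgtr1$.
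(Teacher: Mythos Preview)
Your proposal is correct and follows essentially the same route as the paper: derive the general series bound by combining Tomas--Stein on the central sphere of radius $\rho_k=\mu_k$ with the Koch--Ricci $L^p\to L^q$ estimate for the scaled projector $\Lambda_k^{\mu_k}$, then for each $m_i$ estimate $\mu_k$ and the Jacobian $\mu_k'\sim \mu^{-1}\mu_k$ (resp.\ $|1-\mu|^{-1}\mu_k$) and split the $k$-sum at the threshold $2k+n\sim\mu^{\frac{1}{\gamma}(\frac{1}{\alpha}-\frac{1}{2\beta})}$ (resp.\ $|1-\mu|^{\frac{1}{\alpha}-\frac{1}{2\beta}}$). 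The one point your outline leaves implicit, and which the paper checks case by case, is that the tail of the $k$-series converges precisely because its exponent $\phi(\tfrac1p-\tfrac12)+\phi(\tfrac12-\tfrac1q)-n(\tfrac1p-\tfrac1q)-d(\tfrac1r-\tfrac1{r'})<-1$ under the stated hypotheses, with equality exactly at the excluded endpoint $(d,p,q)=(1,2,2)$.
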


We arrange the remaining part of our paper in the following order. In section \ref{main results}, we will give the main result without proof after introducing basic notations and the restriction operators by functional calculus on the M\'{e}tivier group. Following the theorems, some remarks will also be given. In section \ref{proof of main results}, we will put our attention to the detailed proofs of the main result given in section \ref{main results}.

Concerning the boundness we care in this note, we introduce two notations: we will always use ``$\lesssim$" to indicate the left side is less than a constant multiple \footnote{We don't care the detailed expression of the constant in the present formula.} of the right side, while ``$\lesssim_{\lambda,\ldots}$" to mean the constant is dependent of $\lambda,\ldots$; We also use ``$\sim$" for ``almost" equal, accurately, the quotient is bounded both from above and below.

\section{Main Results}\label{main results}
In subsection \ref{restriction operator}, we introduce the Laplacians on the M\'{e}tivier Group and give the definition and explicit formula of the restriction operators associated to the joint functional calculus of the sub-Laplacian and Laplacian on the central variables. By the partial Randon transform, we can connect our M\'{e}tivier group with the Heisenberg group and  then use the bound of spectral projector of the twisted Laplacian on the Heisenberg group (scaled special Hermite projector) to estimate the mix-norm bound of the restriction operators associated to the Laplacians on the M\'{e}tivier group. In subsection \ref{bound results}, we list two detailed main theorems, but leave their proofs in the next section.

\subsection{Restriction Operators on the M\'{e}tivier Group}\label{restriction operator}
First, we will give some definitions. We take many notations and terminologies from \cite{tha98,cc12}, to which the reader can refer if is not very familiar with them. Let $G$ be a connected, simply connected, two-step nilpotent Lie group, associated with Lie algebra $\mathfrak{g}$, endowed with an inner product $\langle\cdot,\cdot\rangle$. The Lie algebra $\fr{g}$ can be decomposed into the direct sum $\mathfrak{g}=\mathfrak{z}+\mathfrak{v}$, with the center $\fr{z}$ and its complement $\fr{v}$. Take $d = \dim\fr{z}$, $k = \dim\fr{v}$, both which we assume are always positive integers, and $\fr{z}^\ast$ denotes the dual of $\fr{z}$ with dual norm $|\cdot|$ induced by the inner product $\langle\cdot,\cdot\rangle$ in $\fr{z}$. The unit ball in the dual space $\fr{z}^\ast$ is denoted by $S =\{\omega \in \fr{z}^\ast, |\omega|=1\}$. For each $\omega \in S$, we can find a unique unit $Z_\omega \in \fr{z}$ such that $\omega(Z_\omega)=1$. Then the center $\fr{z}$ can be decomposed as $\fr{z}=\mathbb{R}Z_\omega+ker\omega$, with the quotient isomorphism $\fr{z}/ker\omega\simeq\mathbb{R}Z_\omega$. Denote $\fr{g}_\omega=\mathbb{R}Z_\omega+\fr{v}$, then we have isomorphism $\fr{g}/ker\omega\simeq\fr{g}_\omega$. As $ker\omega$ is an ideal of $\fr{z}$,  $\fr{g}_\omega$
is a Lie subalgebra. We denote by $G_\omega$ the connected simply connected Lie subgroup of $G$, associated with Lie algebra $\fr{g}_\omega$. We define the M\'{e}tivier property by the following non-degeneracy description.

\begin{defi}
Bilinear function $B_\omega(U,V)\stackrel{def}{=}\omega([U,V])$ with $U,V \in \fr{v}, \omega \in S $, is called non-degenerate, if
\beq\label{non-degeneracy}
B_\omega(U, V) = 0, ~\forall \,U \in \fr{v}\quad \Rightarrow \quad V = 0.\eeq
\end{defi}
\begin{defi}
Group $G$ discussed above is called a M\'{e}tivier Group, if $B_\omega$ is non-degenerate for all $\omega \in S$.
\end{defi}

D. M\"{u}ller and A. Seeger \cite{ms04singular} gave an example that is a M\'{e}tivier group but not of H-type. For completion, we explain it here: given Lie algebra $\fr{g}=\fr{v}+\fr{z}=\mathbb{R}^8+\mathbb{R}^2$, with the Lie bracket
\[[V+Z,U+W]=0+(v^tJ_{(1,0)}u,v^tJ_{(0,1)}u),\]
where $u,v$ are the coordinates of $U,V$ in an orthonormal basis, and matrix \[J_z=\left(\begin{array}{cc}
                   0 & E_z \\
                   -E_z & 0
                 \end{array}\right),\quad
E_z=\left(\begin{array}{cccc}
                                                         z_1 & 0 & 0 & -z_2 \\
                                                         z_2 & z_1 & 0 & 0 \\
                                                         0 & z_2 & z_1 & 0 \\
                                                         0 & 0 & z_2 & z_1
                                                       \end{array}\right).
\] As $|J_z|=(z_1^4+z_2^4)^2\neq0 \mbox{ unless } z=0$, we see $\fr{g}$ is M\'{e}tivier with non-degeneracy property (\ref{non-degeneracy}). Now we try to say that $\fr{g}$ is not of H-type. Actually, assume that there is another H-type Lie algebra $\fr{g}'=\fr{v}'+\fr{z}'$ (in abuse of notation, a map\footnote{See accurate definition of H-type and this map in Kaplan \cite{kaplan80}.} $J'_{z'}: \fr{v}'\rightarrow\fr{v}'$ just as $B_\omega(U)$ above, is orthogonal whenever $|z'|=1$, and we sometimes identify $\fr{z}$ and $\fr{z}^*$) and a Lie algebra isomorphism $\alpha: \fr{g}\rightarrow\fr{g}'$, then under orthonormal basis,
$\alpha=\left(\begin{array}{cc}
          A & 0 \\
          C & D
        \end{array}\right)$ (notice that $\alpha$ preserves the centers and induces an isomorphism $\fr{z}\rightarrow \fr{z}'$).
By the conservation of Lie brackets, for any $u,v,z$, we have \[(A^{-1}v)^tJ'_{D^tz}(A^{-1}u)= v^tJ_zu,\] which tells (from $J'^2_{z'}=-|z'|^2 I$) the following determinant relation  \[|D^tz|^8=|J'_{D^tz}|=|A|^2|J_z|=|A|^2(z_1^4+z_2^4)^2.\] Take $|A|^{-1/4}D^t=\left(\begin{array}{cc}
                                                                                    a & b \\
                                                                                    c & d
                                                                                  \end{array}\right)
$, then we have
\[[(az_1+bz_2)^2+(cz_1+dz_2)^2]^2=z_1^4+z_2^4,\]
which implies a contradiction
  \[\left\{\begin{array}{rcl}
       a^2+c^2&=&1, \\
       b^2+d^2&=&1, \\
       2(ab+cd)^2+1&=&0,\\
       ab+cd&=&0.
  \end{array}\right.\]
For M\'{e}tivier Group $G$, the dimension of $\fr{v}$ is even from the non-degeneracy and skew-symmetry, which we denote by $\dim\fr{v}= 2n$, then Lie subgroup $G_\omega \simeq \mathbb{H}^n$, the Heisenberg Group with Lie algebra $\fr{h}_n = \mathbb{C}^n+\bb{R}$. We will use the spectral decomposition of the Laplacians on the Heisenberg Goup to get its counterpart on the M\'{e}tivier Group, and finally obtain the corresponding restriction Operators.

By the nilpotency of $G$, we can parametrize $G$ by its Lie algebra $\fr{z}+\fr{v}$, through the surjective exponential map. Fix a basis of Lie algebra $\fr{g}$,
\[\{V_1, V_2,\ldots, V_{2n}; Z_1, Z_2,\ldots, Z_d\}, \]
then we can endow every point of  group $G$ with an exponential coordinate $(V,Z)$ or $(v,z) \in \mathbb{R}^{2n}\times\mathbb{R}^{d}$. By Baker-Campbell-Hausdorff formula, we get the multiplication law
\[(V,Z)\cdot(V',Z')=(V+V',Z+Z'+\frac{1}{2}[V,V']),\]
with $V,V' \in \fr{v}, Z,Z' \in \fr{z}$.  Simple computation gives the left-invariant vector fields
\begin{align*}
\widetilde{V_j}&=\frac{\partial}{\partial v_j}+\frac{1}{2}\sum_{i=1}^d  \langle Z_i,[V,V_j]\rangle \frac{\partial}{\partial z_i}, \qquad j=1,\ldots,2n, \\
\widetilde{Z_i}&=\frac{\partial}{\partial z_i}, \qquad \quad i=1,\ldots,d,
\end{align*}
associated respectively to one-parameter subgroups $\{(sV_j,0)|s \in \mathbb{R}\}$ and $\{(0,tZ_i)|t \in \mathbb{R}\}$. These $2n+d$ vector fields form a basis of the tangent boundle of $G$. Now we can define on $G$ the \emph{sub-Laplacian}, the \emph{Laplacian on the center}, and the \emph{full Laplacian} respectively to be
\[\mathcal{L}=-\sum_{j=1}^{2n} (\widetilde{V_j})^2,\quad -\Delta_\fr{z}=-\sum_{i=1}^d (\widetilde{Z_i})^2,\quad \Delta_G=\mathcal{L}-\Delta_\fr{z}.\] H\"{o}mander's theorem tells that the sub-Laplacian and full Laplacian are hypoelliptic, positive, and essentially self-adjoint, and the Laplacians play an important role in harmonic analysis on the group.

The \emph{partial} Radon transform on the central variables is a powerful tool for us to get the spectral decomposition on M\'{e}tivier group from that on the Heisenberg group.
\begin{defi}
For $\omega \in S, f \in \mathcal{S}(G)$, the Schwartz space on G, we define the partial Radon transform of $f$ to be
\[R_\omega f (V,t)= \int_{ker\omega} f(V,tZ_\omega+Z') d\sigma(Z').\]
\end{defi}
\begin{lem}\label{relations}
For Schwartz functions $f \in \mathcal{S}(G), g \in \mathcal{S}(G_\omega)$, we have the following formulas
\begin{enumerate}[label=\emph{(\arabic*)}]
  \item
  Sub-Laplacians on $G$ and $G_\omega$:
  \begin{align*}
  R_\omega (\widetilde{V_j} f) (V,t) = V_j^\omega (R_\omega f)(V,t), \quad
  R_\omega(\mathcal{L}f) =\mathcal{L}^\omega(R_\omega f) ,                                                                 \end{align*}
  where $V_j^\omega = \frac{\partial}{\partial v_j}+\frac{1}{2}\omega([V,V_j]) \frac{\partial}{\partial t}$  and $\mathcal{L}^\omega = -\sum_{j=1}^{2n} (V_j^\omega)^2$ are respectively the left-invariant vector field and sub-Laplacian on $G_\omega$.
  \item Sub-Laplacian and twisted Laplacian on $G_\omega$:
  \begin{align*}
   \fr{F}_1(V_j^\omega g)(V,\lambda) = V_j^{\lambda,\omega}(\fr{F_1}g)(V,\lambda), \quad
   \fr{F}_1(\mathcal{L}^\omega g) = L^{\lambda,\omega}(\fr{F}_1g),
  \end{align*}
  where $\fr{F}_1$ means the inverse Fourier transform on the one dimensional center, and $V_j^{\lambda,\omega}= \frac{\partial}{\partial v_j}-\frac{i\lambda}{2}\omega([V,V_j])$ and $L^{\lambda,\omega}= -\sum_{j=1}^{2n} (V_j^{\lambda,\omega})^2$ are respectively the  $\lambda$-twisted left-invariant vector field and $\lambda$-twisted Laplacian on $G_\omega$.
  \item Sub-Laplacian and $\lambda\omega$-twisted Laplacian on $G$:
  \begin{align*}
    \fr{F}_\fr{z}(\widetilde{V_j}f)(V,\lambda\omega) = V_j^{\lambda\omega}(\fr{F}_\fr{z}f)(V,\lambda\omega),\quad
    \fr{F}_\fr{z}(\mathcal{L}f) = L^{\lambda\omega}(\fr{F}_\fr{z}f),
  \end{align*}
  where $\fr{F}_\fr{z}$ means the inverse Fourier transform on the center, and $V_j^{\lambda\omega}= \frac{\partial}{\partial v_j}-\frac{i\lambda}{2}\omega([V,V_j])$ and $L^{\lambda\omega}= -\sum_{j=1}^{2n} (V_j^{\lambda\omega})^2$ are respectively the $\lambda\omega$-twisted left-invariant vector field and  $\lambda\omega$-twisted Laplacian on $G$.
\end{enumerate}
\end{lem}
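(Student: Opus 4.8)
The plan is to establish each identity first for a single vector field and then read off the statement for the corresponding sub-Laplacian essentially for free. Indeed, every operator appearing here ($\ca{L}$, $\ca{L}^\omega$, $L^{\lambda,\omega}$, $L^{\lambda\omega}$) has the form $-\sum_{j=1}^{2n}X_j^2$, and a first-order intertwining $T(X_j h)=Y_j(Th)$ immediately iterates to $T(X_j^2 h)=Y_j^2(Th)$; summing over $j$ then gives the second-order relation. So the whole lemma reduces to the three first-order identities for $\widetilde{V_j}$, $V_j^\omega$, and $\widetilde{V_j}$ again.

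For (1), I would expand $\widetilde{V_j}f=\partial_{v_j}f+\frac12\sum_{i=1}^d\langle Z_i,[V,V_j]\rangle\partial_{z_i}f$ and apply $R_\omega$. The $\partial_{v_j}$ term commutes with integration over the central fibre, producing $\partial_{v_j}R_\omega f$. The central part $\sum_i\langle Z_i,[V,V_j]\rangle\partial_{z_i}$ is exactly the directional derivative along $[V,V_j]\in\fr{z}$, which I would split using the orthogonal decomposition $\fr{z}=\bb{R}Z_\omega\oplus ker\omega$ (orthogonal because $Z_\omega$ is the unit vector corresponding to $\omega$ under $\fr{z}\simeq\fr{z}^\ast$): write $[V,V_j]=\omega([V,V_j])Z_\omega+W'$ with $W'\in ker\omega$. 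The $Z_\omega$-component passes through $R_\omega$ as $\partial_t$, since $t$ is precisely the $Z_\omega$-coordinate and is untouched by the fibre integration, giving $\frac12\omega([V,V_j])\partial_t R_\omega f$; the $W'$-component is a derivative tangent to the hyperplane $ker\omega$ over which we integrate, so its integral vanishes by the divergence theorem (the Schwartz decay of $f$ kills the contribution at infinity). Summing reproduces $V_j^\omega R_\omega f$.

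Parts (2) and (3) are instances of the standard fact that the Fourier transform turns differentiation into multiplication by the dual variable. For (2) I would apply $\fr{F}_1$ in the one-dimensional central variable $t$ of $G_\omega$: the $\partial_{v_j}$ term commutes with $\fr{F}_1$, while $\fr{F}_1(\partial_t g)=-i\lambda\,\fr{F}_1 g$ (the sign dictated by the inverse-transform convention), so the coefficient $\frac12\omega([V,V_j])\partial_t$ becomes $-\frac{i\lambda}{2}\omega([V,V_j])$, which is exactly $V_j^{\lambda,\omega}$. For (3) I would apply $\fr{F}_\fr{z}$ on the full $d$-dimensional center and evaluate at $\lambda\omega$: each $\fr{F}_\fr{z}(\partial_{z_i}f)$ becomes $-i\lambda\,\omega(Z_i)\,\fr{F}_\fr{z}f$, and the coefficients collapse via $\sum_i\langle Z_i,[V,V_j]\rangle\omega(Z_i)=\omega([V,V_j])$ (the expansion of $[V,V_j]$ in the orthonormal basis $\{Z_i\}$ paired against $\omega$), again giving $-\frac{i\lambda}{2}\omega([V,V_j])$ and hence $V_j^{\lambda\omega}$.

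I expect the only genuinely delicate point to be the vanishing of the tangential integral in (1): one must carefully separate the part of $[V,V_j]$ along $Z_\omega$ (which survives as the $t$-derivative) from the part in $ker\omega$ (which integrates to zero), and justify the boundary-free integration by parts through the Schwartz decay. Everything else is bookkeeping with Fourier normalizations together with the remark that first-order intertwinings square. As a consistency check I would note that (1) and (2) already force (3): along the ray $\lambda\omega$ the projection-slice identity $\fr{F}_\fr{z}f(V,\lambda\omega)=\fr{F}_1(R_\omega f)(V,\lambda)$ holds (since $\langle\lambda\omega,tZ_\omega+Z'\rangle=\lambda t$ and $dZ=dt\,d\sigma(Z')$ under the orthogonal splitting), and $V_j^{\lambda,\omega}=V_j^{\lambda\omega}$, so composing the first two intertwinings recovers the third.
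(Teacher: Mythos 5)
Your proof is correct, and it follows the standard computation: intertwine each first-order field, iterate to get the second-order statement, handle the $\mathbb{R}Z_\omega\oplus\ker\omega$ splitting for the Radon transform (with the tangential derivative integrating to zero by Schwartz decay), and use the Fourier duality $\partial\mapsto -i\lambda$ with the correct sign for the inverse-transform convention. The paper states this lemma without proof, deferring to its references, so there is no alternative argument to compare against; your write-up, including the projection--slice consistency check $\fr{F}_\fr{z}f(V,\lambda\omega)=\fr{F}_1(R_\omega f)(V,\lambda)$ showing that (1) and (2) compose to give (3), is exactly the verification the authors leave implicit.
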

\emph{Remark:}
Lemma \ref{relations} tells us that the $\lambda\omega$-twisted Laplacian on $G$ is nothing but the $\lambda$-twisted Laplacian on $G_\omega$. This give the idea of how to get the spectral projection of the twisted Laplacian on $G$.

Observing the non-degeneracy property (\ref{non-degeneracy}) of skew-symmetric bilinear function $B_\omega$, we can use an invertable linear transfrom $A_\omega$ to change the bilinear function to the standard symplectic form
$\left(
\begin{array}{cc}
0 & \mathbb{I}^n\\
-\mathbb{I}^n & 0\\
\end{array}
\right)$.
In this new coordinates, denoted e.g. by $\{y_j\}_{j=1}^{2n}$, the $\lambda$-twisted Laplacian on $G_\omega$ is then
\[L^{\lambda,\omega}= -\sum_{j=1}^{2n} \frac{\partial^2}{\partial y_j^2}+ \frac{\lambda^2}{4}\sum_{j=1}^{2n} y_j^2+i\lambda\sum_{j=1}^{n} \Big(y_j\frac{\partial}{\partial y_{j+n}}-y_{j+n}\frac{\partial}{\partial y_j}\Big),\] which is just the usual $\lambda$-twisted Laplacian $L^\lambda$ on the Heisenberg group $\mathbb{H}^n$ and actually we have educed an isomorphism (non-isometric) between $G_\omega$ and $\bb{H}^n$. Then we get the spectral decomposition of the $\lambda\omega$-twisted Laplacian of $G$ in the following proposition.

\begin{pro}\label{proposition}
For $g \in \mathcal{S}(\fr{v}), \omega \in S$, take $g_\omega= g\circ (A_\omega)^{-1}$ and denote by $\Pi_k^{\lambda\omega}$ the spectral projector of the $\lambda\omega$-twisted Laplacian on $G$, then
\begin{align}\label{special hermite expansion}
 g  = (2\pi)^{-n} \lambda^n \sum_k \Pi_k^{\lambda\omega}g,\quad
 \Pi_k^{\lambda\omega}g  = (\Lambda_k^\lambda g_\omega)\circ A_\omega,
\end{align}
where $\Lambda_k^\lambda$ is the spectral projection of the $\lambda$-twisted Laplacian $L^\lambda$ on the Heisenberg group $\mathbb{H}^n$, given by the twisted convolution $\Lambda_k^\lambda g (z)= g \times_\lambda \varphi_k^{|\lambda|} (z)$ for $z \in \mathbb{C}^n$, and the special Hermite function  $\varphi_k^\lambda(z) = L_k^{n-1}(\frac{\lambda}{2}|z|^2)e^{-\frac{\lambda }{4}|z|^2}$, where $L_k^{n-1}$ is the Laguerre polynomial of type $n-1$ and degree $k$.
\end{pro}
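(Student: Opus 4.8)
The plan is to transport the entire problem to the Heisenberg group $\mathbb{H}^n$ through the linear change of variables $A_\omega$, invoke the classical special Hermite (Laguerre) expansion for the twisted Laplacian $L^\lambda$ there, and then pull the resulting resolution of the identity back to $\fr{v}$ by composing with $A_\omega$. The fact that makes this work is the operator identity already isolated in the paragraph preceding the statement: since the M\'{e}tivier (non-degeneracy) hypothesis lets $A_\omega$ carry $B_\omega$ to the standard symplectic form, writing $L^{\lambda,\omega}$ in the coordinates $y=A_\omega v$ produces exactly the standard twisted Laplacian $L^\lambda$ on $\mathbb{H}^n$. Equivalently, writing $C_{A_\omega}h := h\circ A_\omega$ and $g_\omega = g\circ A_\omega^{-1} = C_{A_\omega}^{-1}g$, I would record the intertwining
\[ L^{\lambda,\omega}g = \bigl(L^\lambda g_\omega\bigr)\circ A_\omega = C_{A_\omega}\,L^\lambda\,C_{A_\omega}^{-1}\,g \]
on the Schwartz space $\mathcal{S}(\fr{v})$.

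First I would recall the spectral theory of $L^\lambda$ on $\mathbb{C}^n$: the special Hermite functions diagonalize $L^\lambda$, which has discrete spectrum $\{(2k+n)|\lambda|\}_{k\ge 0}$, the projection onto the $k$-th eigenspace being the twisted convolution $\Lambda_k^\lambda h = h\times_\lambda \varphi_k^{|\lambda|}$, together with the associated resolution of the identity
\[ h = (2\pi)^{-n}\lambda^n\sum_k \Lambda_k^\lambda h, \qquad h\in\mathcal{S}(\mathbb{C}^n), \]
a pointwise (and $L^2$) identity. Applying this to $h=g_\omega$ and composing both sides with $A_\omega$ would then give the desired resolution for $g$, since composition is linear and commutes with the pointwise convergent sum. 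I would stress here that, because this is a \emph{pointwise} identity rather than an integral one, no Jacobian factor $|\det A_\omega|$ is introduced, so the normalizing constant $(2\pi)^{-n}\lambda^n$ carries over verbatim.

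The main step is then to identify the spectral projector $\Pi_k^{\lambda\omega}$ of $L^{\lambda,\omega}$ with $C_{A_\omega}\Lambda_k^\lambda C_{A_\omega}^{-1}$, that is, $\Pi_k^{\lambda\omega}g = (\Lambda_k^\lambda g_\omega)\circ A_\omega$. From the intertwining above one gets $p(L^{\lambda,\omega}) = C_{A_\omega}\,p(L^\lambda)\,C_{A_\omega}^{-1}$ for every polynomial $p$, hence throughout the functional calculus; isolating (via a limit of such $p$) the indicator of the single eigenvalue $(2k+n)|\lambda|$ singles out the $k$-th projector. More transparently, if $u$ is a special Hermite eigenfunction with $L^\lambda u = (2k+n)|\lambda|u$, then $L^{\lambda,\omega}(u\circ A_\omega) = (2k+n)|\lambda|\,(u\circ A_\omega)$, so $C_{A_\omega}$ carries the $k$-th eigenspace of $L^\lambda$ bijectively onto that of $L^{\lambda,\omega}$; combined with the completeness of the special Hermite system this forces the stated formula for $\Pi_k^{\lambda\omega}$.

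The hard part will be precisely this transfer of the spectral projections through $A_\omega$: because the symplectic normalization is in general \emph{non-isometric}, the composition operator $C_{A_\omega}$ is not unitary on $L^2(\fr{v})$, so one cannot simply quote unitary equivalence of the two functional calculi. The care required is to argue at the level of the complete eigenfunction expansion — matching eigenvalues and eigenspaces under the similarity $L^{\lambda,\omega}=C_{A_\omega}L^\lambda C_{A_\omega}^{-1}$ on the common core $\mathcal{S}(\fr{v})$ — rather than through an $L^2$-isometry, and to verify that the eigenvalue $(2k+n)|\lambda|$ is unaffected by $A_\omega$ so that the index $k$ labelling $\Pi_k^{\lambda\omega}$ agrees with the index of $\Lambda_k^\lambda$.
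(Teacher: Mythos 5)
Your proposal is correct and follows essentially the same route as the paper, which states the proposition as an immediate consequence of the preceding paragraph: the normalization $A_\omega$ carries $L^{\lambda,\omega}$ to the standard twisted Laplacian $L^\lambda$ on $\mathbb{H}^n$, so the special Hermite expansion and its projections transfer by conjugation with the composition operator $g\mapsto g\circ A_\omega$. You merely make explicit the similarity $L^{\lambda,\omega}=C_{A_\omega}L^\lambda C_{A_\omega}^{-1}$ and the matching of eigenspaces (and correctly note that no Jacobian enters a pointwise identity), details the paper leaves implicit.
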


Now, we use the spectral projector $\Pi_k^{\lambda\omega}$ to give the restriction operators associated to the joint functional calculus of $\mathcal{L} \text{ and } -\Delta_\fr{z}$.
From the inverse Fourier transform formula on the central variables, polar coordinates transformation, and spectral expansion (\ref{special hermite expansion}), we can get the following expansion for $f\in \ca{S}(G)$
\begin{align}\label{f expansion}
f(V,Z)& \sim \int_{\bb{R}^d} e^{-i\eta(Z)}\fr{F}_\fr{z}f(V,\eta) d\eta\nonumber\\
& = \iint_{\bb{S}^{d-1}\times\bb{R}^+} e^{-i\lambda\omega(Z)}\fr{F}_\fr{z}f(V,\lambda\omega)\lambda^{d-1}d\sigma(\omega)d\lambda\nonumber\\
& \sim \int_0^\infty\Big(\sum_{k=0}^\infty \lambda^{n+d-1} \int_{S^{d-1}} e^{-i\lambda\omega(Z)} (\Pi_k^{\lambda\omega}\circ \fr{F}_\fr{z})f(V,\lambda\omega)\,d\sigma(\omega)\Big)d\lambda.
\end{align}
Since the $k$-term in the sum is the joint eigenfunction\footnote{See for $\ca{L}$ from (3) in Lemma \ref{relations} and Proposition \ref{proposition} and it's obvious for $-\Delta_{\fr{z}}$.}
of $\mathcal{L}$ and $-\Delta_\fr{z}$, associated to the spectrum ray $R_k=\big((2k+n)\lambda,\lambda^2\big)$, we can naturally define the functional calculus operator associated to a function $m$ by
\begin{align}\label{functional calculus}
&m(\mathcal{L},-\Delta_\fr{z})f(V,Z)\nonumber\\
&\sim \int_0^\infty\Big(\sum_{k=0}^\infty m\big((2k+n)\lambda,\lambda^2\big)\lambda^{n+d-1} \int_{S^{d-1}}e^{-i\lambda\omega(Z)} (\Pi_k^{\lambda\omega}\circ \fr{F}_\fr{z})f(V,\lambda\omega)\,d\sigma(\omega)\Big)d\lambda,
\end{align} given a ``proper" $m$ such that $m\big((2k+n)\lambda,\lambda^2\big)$ is differentiable, positive, and strictly monotonic on $\mathbb{R}^+$ with regards to $\lambda$.
Generally, given a spectral decomposition of operator $D$\[D = \int_{\mathbb{R}^+} \lambda dE_\lambda,\] then the associated \emph{restriction operator} (spectral projector) can be defined as
\begin{align*}
   \mathcal{P}_\lambda^D & =\lim_{\epsilon\rightarrow0}\frac{1}{\epsilon}\chi_{(\lambda-\epsilon,\lambda+\epsilon)}(D)\\
  & =\lim_{\epsilon\rightarrow0}\frac{1}{\epsilon}\int_{\lambda-\epsilon}^{\lambda+\epsilon} dE_\mu,
\end{align*} and similarly, the restriction operator associated to $m(D)$ can be defined as \[\lim_{\epsilon\rightarrow0}\frac{1}{\epsilon}\chi_{(\lambda-\epsilon,\lambda+\epsilon)}\big(m(D)\big).\]
Now, we will care the operator $m(\mathcal{L},-\Delta_\fr{z})$ on $G$. $~\forall\,\mu \in \mathbb{R}^+$, $\mu_k$ denotes the solution $\lambda$ of the equation $m((2k+n)\lambda,\lambda^2)=\mu$ and $\mu_k'$ denotes the derivative relative to $\mu$, and we will write simply $\mathcal{P}_\mu^m$ to mean the restriction operator $\mathcal{P}_\mu^{m(\mathcal{L},-\Delta_\fr{z})}$, whose formula is given in the following theorem.

\begin{thm}\label{t-spectral decomposition}
$\forall\, f \in \mathcal{S}(G)$, \,$f$ have the following expansion
\[f  =\int_{\mathbb{R}^+}  \mathcal{P}_\mu^m f d\mu\]
under the spectral decomposition $m(\ca{L},-\Delta_{\fr{z}})\circ\ca{P}_\mu^m=\mu~ \ca{P}_\mu^m$,
where $\ca{P}_\mu^m$ is the restriction operator defined by
\beq\label{restriction operator formula}
\mathcal{P}_\mu^m f (V,Z)  \sim \sum_{k=0}^\infty \mu_k^{n+d-1} \mu_k' \int_{S^{d-1}}e^{-i\mu_k\omega(Z)} (\Pi_k^{\mu_k\omega}\circ \fr{F}_\fr{z})f(V,\mu_k\omega)\,d\sigma(\omega).
\eeq
\end{thm}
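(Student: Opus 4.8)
The plan is to read off the formula \emph{(\ref{restriction operator formula})} from the functional calculus expansion \emph{(\ref{functional calculus})} by a change of variables carried out one Landau level at a time, and then to verify the eigenvalue identity $m(\ca{L},-\Delta_\fr{z})\circ\ca{P}_\mu^m=\mu\,\ca{P}_\mu^m$ termwise. I would begin from the two expansions already in hand: the bare resolution \emph{(\ref{f expansion})} of $f$ and the functional calculus expansion \emph{(\ref{functional calculus})}, both presented as a $\lambda$-integral over $\mathbb{R}^+$ of a sum over $k$. The structural fact I would exploit is that, for each fixed $k$, the inner object
\[\lambda^{n+d-1}\int_{S^{d-1}}e^{-i\lambda\omega(Z)}(\Pi_k^{\lambda\omega}\circ\fr{F}_\fr{z})f(V,\lambda\omega)\,d\sigma(\omega)\]
is, by part (3) of Lemma \ref{relations} together with Proposition \ref{proposition}, a joint eigenfunction of $\ca{L}$ and $-\Delta_\fr{z}$ whose eigenvalue pair sits on the spectrum ray $R_k=\big((2k+n)\lambda,\lambda^2\big)$.

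Next, using the hypothesis that $\lambda\mapsto m\big((2k+n)\lambda,\lambda^2\big)$ is differentiable, positive and strictly monotonic on $\mathbb{R}^+$, I would interchange the sum over $k$ with the $\lambda$-integral and, for each $k$, substitute $\mu=m\big((2k+n)\lambda,\lambda^2\big)$. By definition this means $\lambda=\mu_k$ and $d\lambda=\mu_k'\,d\mu$, so the $k$-th term picks up the weight $\mu_k^{n+d-1}\mu_k'$ and has its frequency $\lambda$ replaced everywhere by $\mu_k$. Interchanging back collapses the whole expression into $\int_{\mathbb{R}^+}\ca{P}_\mu^m f\,d\mu$ with $\ca{P}_\mu^m$ exactly as in \emph{(\ref{restriction operator formula})}. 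Here one must record the image interval of each monotone map: for the homogeneous symbols with $\gamma>0$ it is all of $\mathbb{R}^+$, whereas for the inhomogeneous symbols it is a half-line, so the sum in \emph{(\ref{restriction operator formula})} is read as running only over those $k$ for which $\mu$ lies in range.

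For the spectral identity I would apply $m(\ca{L},-\Delta_\fr{z})$ directly to the series \emph{(\ref{restriction operator formula})}. Since the $k$-th summand is a joint eigenfunction attached to $R_k$ evaluated at $\lambda=\mu_k$, the operator multiplies it by $m\big((2k+n)\mu_k,\mu_k^2\big)$, which is exactly $\mu$ by the defining equation of $\mu_k$; hence every summand is scaled by the common factor $\mu$ and $m(\ca{L},-\Delta_\fr{z})\ca{P}_\mu^m f=\mu\,\ca{P}_\mu^m f$. To close the loop I would check that this explicit $\ca{P}_\mu^m$ coincides with the abstract limit $\lim_{\epsilon\rightarrow0}\epsilon^{-1}\chi_{(\mu-\epsilon,\mu+\epsilon)}\big(m(\ca{L},-\Delta_\fr{z})\big)$, which follows from the same substitution applied to $\chi_{(\mu-\epsilon,\mu+\epsilon)}$ inside \emph{(\ref{functional calculus})}, the corresponding $\lambda$-interval having length $\sim 2\epsilon\mu_k'$.

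The main obstacle I anticipate is analytic rather than algebraic: justifying the interchange of the $k$-summation with the $\lambda$- and $\mu$-integrations and the termwise action of the unbounded operator $m(\ca{L},-\Delta_\fr{z})$, all of which are only formally valid under the ``$\sim$'' of \emph{(\ref{f expansion})}--\emph{(\ref{functional calculus})}. On $\ca{S}(G)$ this should be tamed by the rapid decay in $k$ of the projections $\Pi_k^{\lambda\omega}\fr{F}_\fr{z}f$ together with the smooth monotone dependence of $\mu_k$ on $\mu$; a secondary point is to control $\mu_k$ and $\mu_k'$ near the endpoints of the range so that the $\mu$-integral and the defining limit are well posed.
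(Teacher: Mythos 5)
Your proposal is correct and follows essentially the same route as the paper: the change of variables $\mu=m\big((2k+n)\lambda,\lambda^2\big)$ performed per Landau level $k$ (producing the Jacobian weight $\mu_k^{n+d-1}\mu_k'$) to obtain \emph{(\ref{restriction operator formula})}, followed by a termwise verification via Lemma \ref{relations} and Proposition \ref{proposition} that each summand is a joint eigenfunction with $\ca{L}$-eigenvalue $(2k+n)\mu_k$ and $-\Delta_\fr{z}$-eigenvalue $\mu_k^2$, hence is multiplied by $m\big((2k+n)\mu_k,\mu_k^2\big)=\mu$. Your additional remarks on the range of the monotone map and on justifying the formal interchanges are sensible elaborations the paper leaves implicit, but they do not change the argument.
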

\begin{proof}
By changing variables from (\ref{f expansion}) or (\ref{functional calculus}),
  \[f(V,Z) \sim \int_0^\infty\Big(\sum_{k=0}^\infty \mu_k^{n+d-1} \int_{S^{d-1}} e^{-i\mu_k\omega(Z)}(\Pi_k^{\mu_k\omega}\circ \fr{F}_\fr{z})f(V,\mu_k\omega)\,d\sigma(\omega)\Big)\,d\mu_k.  \]
From Lemma \ref{relations} and Proposition \ref{proposition}, we have, after taking \[f_{\mu_k\omega}(V,Z)=e^{-i\mu_k\omega(Z)}(\Pi_k^{\mu_k\omega}\circ \fr{F}_\fr{z})(V,\mu_k\omega),\] that  \begin{align*}\ca{L}f_{\mu_k\omega}=\fr{F}_{\fr{z}}^{-1}L^{\lambda \omega}\fr{F}_{\fr{z}}f_{\mu_k\omega}
=(2k+n)\mu_k~ f_{\mu_k\omega}, \quad
(-\Delta_{\fr{z}})f_{\mu_k\omega}=\mu_k^2~ f_{\mu_k\omega},
\end{align*} and
\begin{align*}
m(\ca{L},-\Delta_{\fr{z}})f_{\mu_k\omega}
= m\big((2k+n)\mu_k, \mu_k^2\big)~f_{\mu_k\omega}
= \mu~ f_{\mu_k\omega},
\end{align*} which finally gives
\[m(\ca{L},-\Delta_{\fr{z}})\ca{P}_\mu^mf=\mu~\ca{P}_\mu^mf.\]
\end{proof}

\subsection{Mix-Norm Boundness of the Restriction Operator $\mathcal{P}_\mu^m$}\label{bound results}
In \cite{cc12}, V. Casarino and P. Ciatti have given the mix-norm boundness of the restriction operator $\ca{P}_\mu^{\ca{L}}$ associated to the sub-Laplacian $\mathcal{L}$. We now use the similar method to get the mix-norm boundness of the restriction operators associated to a class of operators $m(\mathcal{L},-\Delta_\fr{z})$. In this subsection, we will state our main theorems of this paper in detail.

First give some notations.
We define a function $\phi$ on [0,1/2] by
\beq\label{phi}
\phi(s)=
\left\{ \begin{array}{cc}
   -s/2\quad & s\leq s^\ast\\
  ns-1/2\quad & s\geq s^\ast
\end{array} \right.,\text{ where } s^\ast = \frac{1}{2n+1},
\eeq from which we see both $\phi$ and $s^\ast$ are related to dimension $n$.
Given $\alpha, \beta \in \mathbb{R}^+, \gamma \in \mathbb{R}^\ast, 1\leq p\leq 2\leq q\leq\infty, 1\leq r\leq 2\frac{d+1}{d+3}$, we define four numbers relative to $(\alpha,\beta,\gamma,p,q,r,n,d)$:
\begin{align}\label{exponent}
  A=&~\frac{1}{2\beta\gamma}[n(\frac{1}{p}-\frac{1}{q})+d(\frac{1}{r}-\frac{1}{r'})]+\frac{1}{\gamma}(\frac{1}{\alpha}-\frac{1}{2\beta})[\phi(\frac{1}{p}-\frac{1}{2})+\phi(\frac{1}{2}-\frac{1}{q})+1]-1,\nonumber\\
  B=&~\frac{1}{\alpha\gamma}[n(\frac{1}{p}-\frac{1}{q})+d(\frac{1}{r}-\frac{1}{r'})]-1,\nonumber\\
  C=&~\frac{1}{2\beta}[n(\frac{1}{p}-\frac{1}{q})+d(\frac{1}{r}-\frac{1}{r'})]+(\frac{1}{\alpha}-\frac{1}{2\beta})[\phi(\frac{1}{p}-\frac{1}{2})+\phi(\frac{1}{2}-\frac{1}{q})+1]-1,\nonumber\\
  D=&~\frac{1}{\alpha}[n(\frac{1}{p}-\frac{1}{q})+d(\frac{1}{r}-\frac{1}{r'})]-1.
\end{align}

Now, we can state the boundness theorems of restriction operators $\mathcal{P}_\mu^m$.
\begin{thm}\label{mt1}
Given $\alpha,\beta > 0, \gamma \neq 0, 1\leq p\leq 2\leq q\leq\infty, 1\leq r\leq 2\frac{d+1}{d+3}$, $(d,p,q)\neq(1,2,2)$, $m(a,b)=(a^\alpha+b^\beta)^\gamma$, we have for all $f\in \ca{S}(G)$,
\[\parallel \mathcal{P}_\mu^m f \parallel_{L_z^{r'}L_v^q}\leq C_\mu^m \parallel f \parallel_{L_z^rL_v^p},\] with
\beq\label{f-mt1} C_\mu^m \lesssim_{\alpha,\beta,\gamma,p,q,r,n,d}
\left\{\begin{array}{cc}
   \mu^B \qquad & \mu^{\frac{1}{\gamma}(\frac{1}{\alpha}-\frac{1}{2\beta})}\leq 1,\\
   \mu^A \qquad & \mu^{\frac{1}{\gamma}(\frac{1}{\alpha}-\frac{1}{2\beta})}> 1.
\end{array}\right.\eeq
\end{thm}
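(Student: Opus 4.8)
The plan is to start from the explicit series (\ref{restriction operator formula}), estimate each summand as an operator $L_z^rL_v^p\to L_z^{r'}L_v^q$, and then sum the resulting constants using the asymptotics of $\mu_k$ and $\mu_k'$ dictated by the defining relation $m((2k+n)\mu_k,\mu_k^2)=\mu$, i.e. $(2k+n)^\alpha\mu_k^\alpha+\mu_k^{2\beta}=\mu^{1/\gamma}$. Write the $k$-th summand as $\mu_k^{n+d-1}\mu_k'\,\ca{O}_kf$ with $\ca{O}_kf(V,Z)=\int_{S^{d-1}}e^{-i\mu_k\omega(Z)}(\Pi_k^{\mu_k\omega}\circ\fr{F}_\fr{z})f(V,\mu_k\omega)\,d\sigma(\omega)$. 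The crucial observation is that $\ca{O}_k$ does \emph{not} depend on the symbol $m$: the entire $m$-dependence sits in the scalars $\mu_k,\mu_k'$. Hence the operator bound for $\ca{O}_k$ is exactly the building block underlying the theorem of Casarino and Ciatti for the sub-Laplacian stated in the introduction, and the novelty here is the summation.

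First I would establish, uniformly in $\omega\in S$, the per-$k$ estimate $\|\ca{O}_k\|_{L_z^rL_v^p\to L_z^{r'}L_v^q}\lesssim\mu_k^{\,n(\frac1p-\frac1q)+d(\frac1r-\frac1{r'})-n-d}(2k+n)^{\theta}$ with $\theta=\phi(\frac1p-\frac12)+\phi(\frac12-\frac1q)$. This splits into two independent scalings. On the central variable $\ca{O}_k$ is convolution with $\widehat{d\sigma}(\mu_k\,\cdot)$, so the Tomas-Stein theorem on the $d$-dimensional centre — available precisely because $1\le r\le 2\frac{d+1}{d+3}$ is its range — together with a dilation by $\mu_k$ yields $\mu_k^{d(\frac1r-\frac1{r'})-d}$. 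On the $v$-variable, Proposition \ref{proposition} identifies $\Pi_k^{\mu_k\omega}$ with the scaled special Hermite projector $\Lambda_k^{\mu_k}$ transported by the fixed linear map $A_\omega$; feeding in the sharp bound for $\Lambda_k^\lambda$ from \cite{cc12} and rescaling in $\lambda=\mu_k$ gives $\mu_k^{n(\frac1p-\frac1q)-n}(2k+n)^{\theta}$, the exponent $\theta$ being governed by the two regimes of $\phi$ in (\ref{phi}). Since $S$ is compact and $\omega\mapsto A_\omega$ is continuous, this bound is uniform in $\omega$, which is what lets the two factors be combined; the combination into the mix-norm, where the sphere direction $\omega$ is tied to the $v$-projector, is the one genuinely analytic point and I would carry it out as in \cite{cc12}, keeping the oscillation in $z$ while treating the $\omega$-pointwise $v$-projector as a uniformly bounded multiplier.

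Next I would analyse $\mu_k$ and $\mu_k'$. Differentiating the defining relation gives $\mu_k'\sim\mu_k/\mu$ throughout, while $(2k+n)^\alpha\mu_k^\alpha+\mu_k^{2\beta}=\mu^{1/\gamma}$ exhibits a crossover at $k^\ast\sim\mu^{\frac1\gamma(\frac1\alpha-\frac1{2\beta})}$: for $k\lesssim k^\ast$ the central term dominates and $\mu_k\sim\mu^{1/(2\beta\gamma)}$, whereas for $k\gtrsim k^\ast$ the sub-Laplacian term dominates and $\mu_k\sim\mu^{1/(\alpha\gamma)}(2k+n)^{-1}$. Multiplying the building block by $\mu_k^{n+d-1}\mu_k'$ collapses the $k$-th constant to $\mu_k^{\,F_0+1}(2k+n)^{\theta}/\mu$ with $F_0=n(\frac1p-\frac1q)+d(\frac1r-\frac1{r'})-1$, and the theorem follows by summing. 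When $\mu^{\frac1\gamma(\frac1\alpha-\frac1{2\beta})}\le1$ the crossover index lies below $1$, only the large-$k$ regime occurs, the sum is a convergent power series in $(2k+n)$, and its value is the leading term, giving $B$. When $\mu^{\frac1\gamma(\frac1\alpha-\frac1{2\beta})}>1$ the whole sum concentrates near $k\sim k^\ast$; there $\mu_k\sim\mu^{1/(2\beta\gamma)}$ and, since $\phi\ge-\frac14$ forces $\theta>-1$, one has $\sum_{k\lesssim k^\ast}(2k+n)^{\theta}\sim(k^\ast)^{\theta+1}$, and substituting $k^\ast\sim\mu^{\frac1\gamma(\frac1\alpha-\frac1{2\beta})}$ reproduces exactly the exponent $A$ of (\ref{exponent}), the bracket $\phi(\frac1p-\frac12)+\phi(\frac12-\frac1q)+1=\theta+1$ appearing for this reason; a direct check shows the large-$k$ tail matches the same power $A$ at $k^\ast$, so the two pieces are consistent.

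The main obstacle I expect is not any single estimate but the bookkeeping of the summation together with its convergence, where the two breakpoints — the geometric $s^\ast=\frac1{2n+1}$ inside $\phi$ and the analytic $k^\ast$ coming from the joint symbol — interact, so one must verify that in each regime the claimed term genuinely dominates. Convergence of the tail requires a strict inequality of the shape $\theta<F_0$, and it is precisely its failure that forces the exclusion $(d,p,q)\neq(1,2,2)$: there $d=1$ forces $r=1,\,r'=\infty$, and $p=q=2$ makes $\theta=F_0=0$, turning the tail into the divergent $\sum(2k+n)^{-1}$. A secondary, more technical difficulty is the mix-norm combination inside the per-$k$ estimate, which I would dispose of, as in \cite{cc12}, by exploiting the uniformity in $\omega$ of the twisted-Laplacian bound.
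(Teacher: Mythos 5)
Your proposal follows essentially the same route as the paper: the per-$k$ building block you describe (the scaled twisted-Laplacian projector bound in $\lambda=\mu_k$ combined with Tomas--Stein on the centre, assembled into the mix-norm via the duality/Minkowski argument of \cite{cc12}) is exactly the content of Lemma \ref{l-pq bound} and Theorem \ref{t-series bound}, and your summation analysis --- $\mu_k'\sim\mu_k/\mu$, the crossover at $2k+n\sim\mu^{\frac{1}{\gamma}(\frac{1}{\alpha}-\frac{1}{2\beta})}$, the two-case split producing the exponents $B$ and $A$, and the identification of $(d,p,q)=(1,2,2)$ as the divergent endpoint --- matches the paper's proof of Theorem \ref{mt1} step for step. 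I see no gaps.
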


\emph{Remark:}
\begin{itemize}
  \item
  The theorem can be described in several cases relative to parameters $\alpha/2\beta, \,\gamma$, and also $\mu$ , which can be seen from the following table \[
  \left.\begin{array}{c|c|c|c}

    \gamma & \alpha/2\beta & \mu & \text{sharp exponent of } C_\mu^m\\
    \hline
    >0& <1& >1    & A\\ \cline{3-4}
      &   & \leq1 & B\\ \cline{2-4}
      & >1& >1    & B\\ \cline{3-4}
      &   & \leq1 & A\\ \cline{2-4}
      & =1& \multicolumn{2}{c}{\qquad~~ A=B}\\
    \hline
    <0& <1& >1    & B\\ \cline{3-4}
      &   & \leq1 & A\\ \cline{2-4}
      & >1& >1    & A\\ \cline{3-4}
      &   & \leq1 & B\\ \cline{2-4}
      & =1& \multicolumn{2}{c}{\qquad~~ A=B}
  \end{array}\right.\]
  \item The theorem contains the inhomogeneous operator --- full Laplacian $\Delta_G$, when $\alpha=\beta=\gamma=1$ (also homogeneous ones like $\ca{L}^2-\Delta_G$).
  \item The mix-norm bound cover the uniform-norm bound when $p=q'=r$, especially that on H-type groups, when the exponent function is degenerated to an easy form. Actually, from the Clifford algebra of H-type groups, we have a dimension relation $d<2n$ \cite{kr83harmonicanalysis}, which tells $p=q'=r\le 2\frac{d+1}{d+3}<2\frac{2n+1}{2n+3}$, a critical point for $A$ and $B$, more precisely, $1/p-1/2>s^*$, see (\ref{phi}), and
      \[A=\frac{1}{\gamma}(\frac{n}{\alpha}+\frac{d}{2\beta})(\frac{2}{p}-1)-1, \quad    B=\frac{n+d}{\alpha\gamma}(\frac{2}{p}-1)-1,\] which coincides with the result of \cite{ls13}, just like the case for the sub-Laplacian, when the result of \cite{cc12} coincides with that of \cite{lw11} on H-type groups although slightly different arguments are applied.
\end{itemize}

\begin{thm}\label{mt2}
Given $\alpha,\beta > 0, \gamma\neq 0, 1\leq p\leq 2\leq q\leq\infty, 1\leq r\leq 2\frac{d+1}{d+3}$, $(d,p,q)\neq(1,2,2)$, $m(a,b)=(1+a^\alpha+b^\beta)^\gamma$, we have for all $f\in \ca{S}(G)$,
\[\parallel \mathcal{P}_\mu^m f \parallel_{L_z^{r'}L_v^q}\leq C_\mu^m \parallel f \parallel_{L_z^rL_v^p},\]  with
\beq\label{f-mt2} C_\mu^m \lesssim_{\alpha,\beta,\gamma,p,q,r,n,d}
\left\{\begin{array}{cll}
    \mu^B \qquad & \mu\rightarrow \infty^{sign\gamma}, &\mu^{\frac{1}{\gamma}(\frac{1}{\alpha}-\frac{1}{2\beta})}\leq 1,\\
    \mu^A \qquad & \mu\rightarrow \infty^{sign\gamma}, &\mu^{\frac{1}{\gamma}(\frac{1}{\alpha}-\frac{1}{2\beta})}> 1, \\
    |1-\mu|^D \qquad & \mu\rightarrow e^{\frac{sign\gamma}{\infty}}, &|1-\mu|^{\frac{1}{\alpha}-\frac{1}{2\beta}}\leq 1,\\
    |1-\mu|^C \qquad & \mu\rightarrow e^{\frac{sign\gamma}{\infty}}, &|1-\mu|^{\frac{1}{\alpha}-\frac{1}{2\beta}}> 1.
\end{array}\right.
\eeq
\end{thm}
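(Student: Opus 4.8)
The plan is to follow the scheme that establishes Theorem~\ref{mt1}, reducing the mix-norm bound to a term-by-term estimate in the spectral index $k$ and then reading off the sharp power of the spectral parameter by summation; the only genuinely new ingredient for the inhomogeneous symbol is an elementary reduction of $m(a,b)=(1+a^\alpha+b^\beta)^\gamma$ to the homogeneous computation. First I would expand $\ca{P}_\mu^m f$ through formula~(\ref{restriction operator formula}) as $\sum_{k\ge0}T_k f$, where
\[
T_k f(V,Z)=\mu_k^{n+d-1}\mu_k'\int_{S^{d-1}}e^{-i\mu_k\omega(Z)}(\Pi_k^{\mu_k\omega}\circ\fr{F}_\fr{z})f(V,\mu_k\omega)\,d\sigma(\omega),
\]
and record the single per-$k$ estimate
\[
\|T_k f\|_{L_z^{r'}L_v^q}\lesssim \mu_k'\,\mu_k^{E-1}(2k+n)^{\Phi}\,\|f\|_{L_z^rL_v^p},
\]
with $E:=n(\tfrac1p-\tfrac1q)+d(\tfrac1r-\tfrac1{r'})$ and $\Phi:=\phi(\tfrac1p-\tfrac12)+\phi(\tfrac12-\tfrac1q)$. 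Here the factor $(2k+n)^{\Phi}$ is the sharp $L^p\to L^q$ bound of the scaled special Hermite (twisted Laplacian) projector $\Lambda_k$ underlying the definition~(\ref{phi}) of $\phi$, transported to $\Pi_k^{\mu_k\omega}$ by Proposition~\ref{proposition} (the linear change $A_\omega$ has Jacobian bounded uniformly for $\omega\in S$); the power of $\mu_k$ assembles the $\lambda$-scaling of $\Lambda_k$, the normalization $\lambda^n$ of~(\ref{special hermite expansion}), the polar Jacobian $\lambda^{d-1}$, and the Tomas-Stein bound on $\fr{z}^\ast\simeq\bb{R}^d$ for the extension/restriction to the sphere $\mu_k\,\bb{S}^{d-1}$, valid in the stated range $1\le r\le 2\frac{d+1}{d+3}$ after Minkowski's inequality in $\omega$. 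Specializing to $m=a$ and summing recovers the Casarino-Ciatti exponent $\mu^{E-1}$, which pins down the normalization of this per-$k$ bound.

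The decisive second step is to set $\rho:=\mu^{1/\gamma}-1$, so that $\mu_k$ solves the \emph{same} equation $(2k+n)^\alpha\mu_k^\alpha+\mu_k^{2\beta}=\rho$ that governs Theorem~\ref{mt1}, now with $\rho$ in place of $\mu^{1/\gamma}$. All of the $k$-dependence is thereby identical to the homogeneous case, and the only new input is the asymptotics of $\rho$ and of $\rho'=\tfrac1\gamma\mu^{1/\gamma-1}$ in the two admissible limits. As $\mu\to\infty^{\mathrm{sign}\gamma}$ one has $\mu^{1/\gamma}\to\infty$, whence $\rho\sim\mu^{1/\gamma}$ and $\rho'\sim\tfrac1\gamma\mu^{1/\gamma-1}$, exactly as for $m(a,b)=(a^\alpha+b^\beta)^\gamma$; as $\mu\to1$ (the regime $\mu\to e^{\mathrm{sign}\gamma/\infty}$) one linearizes $\rho=\mu^{1/\gamma}-1\sim\tfrac1\gamma(\mu-1)$, so that $\rho\sim|1-\mu|$ while $\rho'\to\tfrac1\gamma$ stays bounded. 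These are the only two places where the ``$+1$'' intervenes.

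Next I would carry out the summation over $k$. Solving $(2k+n)^\alpha\mu_k^\alpha+\mu_k^{2\beta}=\rho$ exhibits a crossover at $k^\ast\sim\rho^{\,\frac1\alpha-\frac1{2\beta}}$: for $2k+n\lesssim k^\ast$ the term $\mu_k^{2\beta}$ dominates, so $\mu_k\sim\rho^{1/2\beta}$, while for $2k+n\gtrsim k^\ast$ the term $(2k+n)^\alpha\mu_k^\alpha$ dominates, so $\mu_k\sim\rho^{1/\alpha}(2k+n)^{-1}$; differentiating the defining relation gives the matching sizes of $\mu_k'$. Substituting these into the per-$k$ estimate, the series $\sum_k\mu_k'\,\mu_k^{E-1}(2k+n)^{\Phi}$ is, thanks to the range restrictions on $p,q,r$ (with $(d,p,q)\neq(1,2,2)$ excluding the degenerate endpoint), dominated either by the terms near $k^\ast$ or by the single term $k=0$, the alternative being governed precisely by $\rho^{\frac1\alpha-\frac1{2\beta}}\gtrless 1$. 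This produces the exponent $A$ (resp.\ $B$) in $\mu$ when $\rho\sim\mu^{1/\gamma}$, and the exponent $C$ (resp.\ $D$) in $|1-\mu|$ when $\rho\sim|1-\mu|$: the substitution $\rho=\mu^{1/\gamma}$ carries the $1/\gamma$ factors present in $A,B$, whereas $\rho\sim|1-\mu|$ with $\rho'$ bounded strips them away, reproducing $C,D$ through the identities $C+1=\gamma(A+1)$ and $D+1=\gamma(B+1)$; this yields all four cases of~(\ref{f-mt2}).

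The hard part will be the last step, the \emph{sharp} summation: to get the exact exponent rather than a mere upper bound one must determine which endpoint controls $\sum_k(2k+n)^{\bullet}$, which hinges on the signs of $\Phi+1$ and $\Phi-E$ and must be tracked uniformly across the crossover region $k\sim k^\ast$, where neither term of $(2k+n)^\alpha\mu_k^\alpha+\mu_k^{2\beta}$ may be discarded. A secondary, but still delicate, point is the uniformity in $k$ of the two asymptotic reductions: I must verify that replacing $\mu^{1/\gamma}-1$ by its leading term is legitimate simultaneously for all $k$ — including the large-$k$ modes, for which $\mu_k\to0$ even when $\mu$ is large — and that the linearization at $\mu=1$ holds with constants uniform in $k$ for both $\rho$ and $\rho'$. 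Granting these, the four regimes of~(\ref{f-mt2}) follow by simply reading off the power of the appropriate spectral parameter.
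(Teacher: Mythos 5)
Your proposal is correct and follows essentially the same route as the paper: the per-$k$ estimate is exactly the series bound of Theorem~\ref{t-series bound}, and your substitution $\rho=\mu^{1/\gamma}-1$ with the asymptotics $\rho\sim\mu^{1/\gamma}$ (as $\mu\to\infty^{\mathrm{sign}\gamma}$) and $\rho\sim|1-\mu|$ with $\rho'$ bounded (as $\mu\to1$) is just a clean repackaging of the paper's estimates (\ref{estimate of mu_k II}) and (\ref{estimate of mu_k'}), after which the splitting of the sum at the crossover $2k+n\sim\rho^{\frac{1}{\alpha}-\frac{1}{2\beta}}$ and the convergence checks on $\Phi>-1$ and $\Phi-E<-1$ reproduce the paper's $I_1+I_2$ analysis verbatim; the identities $C+1=\gamma(A+1)$ and $D+1=\gamma(B+1)$ you note are a nice consistency check but not a new ingredient.
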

\emph{Remark:} \\
\begin{itemize}
  \item We have a similar table as last theorem(we give the case $\gamma<0$, which we care more about):
\[
\left.
  \begin{array}{c|c|c}
    \alpha/2\beta & \mu & \text{sharp exponent of } C_\mu^m\\
    \hline
    <1& \rightarrow0+ & A\\\cline{2-3}
      & \rightarrow1- & D\\
      \hline
    >1& \rightarrow0+ & B\\\cline{2-3}
      & \rightarrow1- & C\\
      \hline
    =1& \rightarrow0+ & A=B\\\cline{2-3}
      & \rightarrow1- & C=D
  \end{array}
\right..
\]

  \item The new approximating situation $\mu\rightarrow 1\pm$ is similar, as we get a similar control of $\mu_k$ and $\mu_k'$. Our results includes many useful operators  like the resolvents $(I+\ca{L})^{-1}$ or $(I+\Delta_G)^{-1}$.
\end{itemize}

\section{Proof of the Main Results}\label{proof of main results}
The following important sharp estimate due to H. Koch and F. Ricci \cite{kr07}, about the $L^p\rightarrow L^2$ bound of the spectral projector of twisted Laplacian for $1\le p\le 2$, is critical in our proof,
\beq\label{kr-twisted spectral projector bound}
\|\Lambda_k\|_{L^p(\mathbb{C}^n)\rightarrow L^2(\mathbb{C}^n)} \lesssim_{p,n} (2k+n)^{\phi(\frac{1}{p}-\frac{1}{2})}.
\eeq
\subsection{Series Bound for General $m$}
\begin{lem}\label{l-pq bound}
Let $\Lambda_k^\lambda$, given in Proposition \ref{proposition}, be the spectral projection operator on the Heisenberg group $\mathbb{H}^n$, then for $1\le p\le 2\le q\le\infty$, we have
\beq\label{f-pqbound}\parallel\Lambda_k^\lambda\parallel_{L^p(\mathbb{C}^n)\rightarrow L^q(\mathbb{C}^n)}\lesssim_{p,q,n} \lambda^{n(\frac{1}{p}-\frac{1}{q}-1)}(2k+n)^{\phi(\frac{1}{p}-\frac{1}{2})+\phi(\frac{1}{2}-\frac{1}{q})}.
\eeq
\end{lem}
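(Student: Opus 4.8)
The plan is to bootstrap the Koch--Ricci endpoint estimate (\ref{kr-twisted spectral projector bound}), which only controls $\Lambda_k \colon L^p \to L^2$, into the full $L^p \to L^q$ bound by two independent moves: a dilation that extracts the $\lambda$-dependence and reduces everything to the unscaled projector $\Lambda_k = \Lambda_k^1$, and a duality-plus-idempotency argument that manufactures the two-term Laguerre exponent $\phi(\frac1p-\frac12)+\phi(\frac12-\frac1q)$.

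First I would handle the $\lambda$-scaling. Since $\varphi_k^{|\lambda|}(z) = \varphi_k^1(\sqrt{|\lambda|}\,z)$ follows directly from the definition $\varphi_k^\lambda(z)=L_k^{n-1}(\frac\lambda2|z|^2)e^{-\frac\lambda4|z|^2}$, I would conjugate $\Lambda_k^\lambda$ by the dilation $D_a g(z)=g(az)$ with $a=\sqrt{|\lambda|}$. A change of variables in the $\lambda$-twisted convolution $g\times_\lambda \varphi_k^{|\lambda|}$ turns the twisting kernel $e^{\frac{i\lambda}{2}\operatorname{Im}(z\cdot\bar w)}$ into $e^{\frac{i\,\operatorname{sign}\lambda}{2}\operatorname{Im}(\cdot)}$ and the Laguerre factor into $\varphi_k^1$, yielding an identity of the shape $\Lambda_k^\lambda = |\lambda|^{-n}\,D_{\sqrt{|\lambda|}}\,\Lambda_k^{\operatorname{sign}\lambda}\,D_{1/\sqrt{|\lambda|}}$. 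Because $D_a$ acts on $L^s(\mathbb{C}^n)$ with the exact factor $a^{-2n/s}$, and $\Lambda_k^{-1}$ has the same $L^p\to L^q$ norm as $\Lambda_k^{1}$ (conjugate by $z\mapsto\bar z$ to flip the sign of the twist), bookkeeping the three factors gives $\|\Lambda_k^\lambda\|_{L^p\to L^q} = |\lambda|^{n(1/p-1/q-1)}\|\Lambda_k\|_{L^p\to L^q}$, which is exactly the claimed $\lambda$-power. This reduces the lemma to the case $\lambda=1$.

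For the unscaled projector I would exploit that $\Lambda_k$ is the orthogonal projection of $L^2(\mathbb{C}^n)$ onto the $(2k+n)$-eigenspace of the twisted Laplacian $L^1$, hence self-adjoint and idempotent: $\Lambda_k = \Lambda_k\circ\Lambda_k$. Estimate (\ref{kr-twisted spectral projector bound}) gives $\|\Lambda_k\|_{L^p\to L^2}\lesssim (2k+n)^{\phi(1/p-1/2)}$ for $1\le p\le 2$. Dualizing, self-adjointness yields $\|\Lambda_k\|_{L^2\to L^q} = \|\Lambda_k\|_{L^{q'}\to L^2}$, and since $2\le q\le\infty$ forces $1\le q'\le 2$, a second application of (\ref{kr-twisted spectral projector bound}) bounds this by $(2k+n)^{\phi(1/q'-1/2)} = (2k+n)^{\phi(1/2-1/q)}$, using $\frac1{q'}-\frac12 = \frac12-\frac1q$. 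Both arguments of $\phi$ lie in $[0,1/2]$, so $\phi$ is defined there. Composing through the idempotency factorization,
\[\|\Lambda_k\|_{L^p\to L^q} \le \|\Lambda_k\|_{L^2\to L^q}\,\|\Lambda_k\|_{L^p\to L^2} \lesssim (2k+n)^{\phi(1/p-1/2)+\phi(1/2-1/q)},\]
which together with the scaling identity proves the lemma.

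The routine but delicate points are the dilation bookkeeping (getting the exponent $n(\frac1p-\frac1q-1)$ right) and the self-adjointness needed for duality; neither is deep. The genuinely useful idea, and the step I would single out, is the factorization $\Lambda_k=\Lambda_k\circ\Lambda_k$, which lets a single $L^p\to L^2$ endpoint estimate be used twice (once directly, once dualized) to produce the off-diagonal $L^p\to L^q$ bound with the additive exponent. Its only hypothesis is that the projector really is idempotent and self-adjoint, so I would verify that the twisted-convolution normalization of $\Lambda_k$ in Proposition \ref{proposition} is consistent with $\Lambda_k$ being a genuine orthogonal spectral projection before invoking it.
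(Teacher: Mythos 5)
Your proposal is correct and follows essentially the same route as the paper's own proof: the dilation identity $\Lambda_k^\lambda=\lambda^{-n}\delta_{\lambda^{1/2}}\Lambda_k^1\delta_{\lambda^{-1/2}}$ to extract the factor $\lambda^{n(\frac1p-\frac1q-1)}$, and the factorization $\Lambda_k=\Lambda_k^*\Lambda_k$ combined with the Koch--Ricci bound and its dual to produce the exponent $\phi(\frac1p-\frac12)+\phi(\frac12-\frac1q)$. Your closing caveat about the normalization is harmless, since the twisted-convolution idempotency only holds up to a fixed constant $(2\pi)^n$, which is absorbed by $\lesssim$.
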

\begin{proof}
From duality and the projection property of $\Lambda_k=\Lambda_k^1$, we see $\Lambda_k=\Lambda_k^2=\Lambda_k^*\Lambda_k$, and the sharp $L^p\rightarrow L^2$ estimate (\ref{kr-twisted spectral projector bound}) for $1\leq p\leq 2$ gives the following $L^p\rightarrow L^q$ estimate for general exponents $1\leq p\leq 2\leq q\leq \infty$,
\[\parallel\Lambda_k^1\parallel_{L^p(\mathbb{C}^n)\rightarrow L^q(\mathbb{C}^n)}\lesssim (2k+n)^{\phi(\frac{1}{p}-\frac{1}{2})+\phi(\frac{1}{2}-\frac{1}{q})}.\]
By the definition of $\Lambda_k^\lambda$ and changing variables, the twisted convolution
\begin{align*}
  \Lambda_k^\lambda g (z)&=\int_{\mathbb{C}^n} g(z-w)\varphi_k^\lambda(w)e^{i\frac{\lambda}{2} Imz\cdot\bar{w}} dw\\
   &=\lambda^{-n}\int_{\mathbb{C}^n}g[\lambda^{-\frac{1}{2}}(\lambda^{\frac{1}{2}}z-w)]\varphi_k(w)e^{\frac{i}{2}Im(\lambda^{\frac{1}{2}}z)\cdot\bar{w}} dw\\
   &=\lambda^{-n}\delta_{\lambda^{\frac{1}{2}}}(\delta_{\lambda^{-\frac{1}{2}}}g\times_1\varphi_k^1)(z)\\
   &=\lambda^{-n}\delta_{\lambda^{\frac{1}{2}}}\big(\Lambda_k^1(\delta_{\lambda^{-\frac{1}{2}}}g)\big)(z),
\end{align*} where we use dilation $\delta_\lambda g(\cdot)=g(\lambda\,\cdot)$.
So we have
\begin{align*}
               \parallel\Lambda_k^\lambda g\parallel_{L^q(\mathbb{C}^n)}
               &\lesssim \lambda^{-n(1+\frac{1}{q})}\parallel \Lambda_k^1 \parallel_{L^p(\mathbb{C}^n)\rightarrow L^q(\mathbb{C}^n)}\parallel \delta_{\lambda^{-\frac{1}{2}}}g\parallel_{L^p(\mathbb{C}^n)}\\
              & \lesssim \lambda^{n(\frac{1}{p}-\frac{1}{q}-1)}\parallel \Lambda_k^1\parallel_{L^p(\mathbb{C}^n)\rightarrow L^q(\mathbb{C}^n)}\parallel g\parallel_{L^p(\mathbb{C}^n)}\\
              & \lesssim \lambda^{n(\frac{1}{p}-\frac{1}{q}-1)}(2k+n)^{\phi(\frac{1}{p}-\frac{1}{2})+\phi(\frac{1}{2}-\frac{1}{q})}\parallel g\parallel_{L^p(\mathbb{C}^n)},
            \end{align*}
and the hidden ignored constant is dependent of $p,q,n$ from (\ref{kr-twisted spectral projector bound}). Therefore, we get the expected bound of $\Lambda_k^\lambda$.
\end{proof}
\begin{thm}\label{t-series bound}
Given $1\leq p\leq 2\leq q\leq\infty, 1\leq r\leq 2\frac{d+1}{d+3}$ and ``proper" $m(\cdot,\cdot)$, then for all $f\in \ca{S}(G)$,
\[\parallel \mathcal{P}_\mu^m f\parallel_{L_z^{r'}L_v^q}\leq C_\mu^m \parallel f \parallel_{L_z^rL_v^p},
\]with
\beq\label{f-series bound}
C_\mu^m \lesssim_{p,q,r,n,d} \sum_{k=0}^\infty \mu_k'\, \mu_k^{n(\frac{1}{p}-\frac{1}{q})+d(\frac{1}{r}-\frac{1}{r'})-1}(2k+n)^{\phi(\frac{1}{p}-\frac{1}{2})+\phi(\frac{1}{2}-\frac{1}{q})}.
\eeq
If $m(\cdot,\cdot)$ is good enough, the sharp constant is finite.
\end{thm}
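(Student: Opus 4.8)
The plan is to expand $\mathcal{P}_\mu^m$ by its explicit series (\ref{restriction operator formula}), estimate each term, and sum. Write $\mathcal{P}_\mu^m f=\sum_{k} T_k f$ with
\[
T_k f(V,Z)=\mu_k^{n+d-1}\mu_k'\int_{S^{d-1}}e^{-i\mu_k\omega(Z)}\,h_k(V,\omega)\,d\sigma(\omega),\qquad h_k(V,\omega)=\Pi_k^{\mu_k\omega}\big[\fr{F}_\fr{z}f(\cdot,\mu_k\omega)\big](V).
\]
By the triangle inequality for the mixed norm it suffices to bound each $\|T_k f\|_{L^{r'}_zL^q_v}$ and to sum the resulting constants. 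The three analytic inputs are the Tomas-Stein extension estimate in the central variable $z$, the sharp projector bound of Lemma \ref{l-pq bound} in the space variable $v$, and the dual restriction estimate back in $z$; the hypothesis $1\le r\le 2\frac{d+1}{d+3}$ is exactly the Tomas-Stein range $r'\ge 2\frac{d+1}{d-1}$.

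The decisive structural feature is that $z$ is the inner and $v$ the outer variable in both mixed norms and that $p\le 2\le q$: this is precisely the configuration in which every Minkowski interchange I need points in the favourable direction, so no vector-valued Tomas-Stein theorem is required. First I freeze $V$ and apply the extension estimate in $z$ to the spherical integral; since the sphere carries radius $\mu_k$, a scaling yields $\|T_k f(V,\cdot)\|_{L^{r'}_z}\lesssim \mu_k^{n+d-1}\mu_k'\,\mu_k^{-d/r'}\,\|h_k(V,\cdot)\|_{L^2_\omega}$. Taking the outer $L^q_v$ norm and using Minkowski's integral inequality (valid since $q\ge 2$) bounds this by a constant multiple of $\big(\int_{S^{d-1}}\|h_k(\cdot,\omega)\|_{L^q_v}^2\,d\sigma(\omega)\big)^{1/2}$. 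For fixed $\omega$ the map $h_k(\cdot,\omega)$ is $\Pi_k^{\mu_k\omega}$ acting in $V$, so Lemma \ref{l-pq bound} gives $\|h_k(\cdot,\omega)\|_{L^q_v}\lesssim \mu_k^{n(1/p-1/q-1)}(2k+n)^{\phi(1/p-1/2)+\phi(1/2-1/q)}\|\fr{F}_\fr{z}f(\cdot,\mu_k\omega)\|_{L^p_v}$, uniformly in $\omega$. Inserting this under the $L^2_\omega$ integral, a second Minkowski interchange (valid since $p\le 2$) turns the inner $L^p_v$ into an outer one, and the restriction estimate applied for each frozen $V$ gives $\big(\int_{S^{d-1}}|\fr{F}_\fr{z}f(V,\mu_k\omega)|^2\, d\sigma(\omega)\big)^{1/2}\lesssim \mu_k^{-d/r'}\|f(V,\cdot)\|_{L^r_z}$; the remaining outer $L^p_v$ norm reassembles exactly $\|f\|_{L^r_zL^p_v}$.

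Collecting the constants, each $T_k$ carries the factor $\mu_k'$, the series weight $\mu_k^{n+d-1}$, the two extension/restriction factors $\mu_k^{-d/r'}$, and the projector factor $\mu_k^{n(1/p-1/q-1)}(2k+n)^{\phi(1/p-1/2)+\phi(1/2-1/q)}$. A one-line bookkeeping check using $d/r'=d-d/r$ shows the total power of $\mu_k$ equals $n(\frac1p-\frac1q)+d(\frac1r-\frac1{r'})-1$, which is precisely the exponent in (\ref{f-series bound}); summing over $k$ produces the asserted series for $C_\mu^m$. Its finiteness for ``good'' $m$ is a separate issue, to be settled from the explicit decay of $\mu_k$ and $\mu_k'$ once $m=m_i$ is specialized in Theorems \ref{mt1} and \ref{mt2}.

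The step demanding the most care is the uniform-in-$\omega$ passage from the Heisenberg projector $\Lambda_k^{\mu_k}$ of Lemma \ref{l-pq bound} to the M\'{e}tivier projector $\Pi_k^{\mu_k\omega}$. By Proposition \ref{proposition}, $\Pi_k^{\mu_k\omega}$ is the conjugate of $\Lambda_k^{\mu_k}$ by the symplectic-normalizing map $A_\omega$, which is non-isometric and depends on $\omega$; conjugation produces a Jacobian factor $|\det A_\omega|^{1/p-1/q}$ in the $L^p\to L^q$ norm. I would dispose of it by observing that the M\'{e}tivier non-degeneracy (\ref{non-degeneracy}) makes $\omega\mapsto B_\omega$, and hence $\omega\mapsto A_\omega$, smooth and nonsingular on the compact sphere $S\cong S^{d-1}$, so that $|\det A_\omega|$ is bounded above and below away from zero. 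Consequently $\|\Pi_k^{\mu_k\omega}\|_{L^p_v\to L^q_v}$ is comparable to $\|\Lambda_k^{\mu_k}\|_{L^p\to L^q}$ with an implied constant independent of $\omega$, which is exactly what the interchange above requires.
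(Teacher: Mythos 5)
Your argument is correct and follows essentially the same route as the paper: the same termwise decomposition of \eqref{restriction operator formula}, the same three inputs (Tomas--Stein at radius $\mu_k$ contributing $\mu_k^{-d/r'}$ twice, the scaled projector bound of Lemma \ref{l-pq bound}, and Minkowski interchanges in the favourable directions $p\le 2\le q$), and the same compactness argument to absorb $|\det A_\omega|$. The only presentational difference is that you estimate $\|T_kf\|_{L^{r'}_zL^q_v}$ directly using the extension estimate on the output side, whereas the paper dualizes against $g$ and applies the restriction estimate to both $f$ and $g$; these are the same estimate in adjoint form.
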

\begin{proof}
 Using the relation between the spectral projection of the $\lambda\omega$-twisted Laplacian on $G$ and that on Heisenberg Group $\mathbb{H}^n$ in Proposition \ref{proposition}, we have
 \[\|\Pi_k^{\lambda\omega}\|_{L^p\rightarrow L^q}=|A_\omega|^{\frac{1}{p}-\frac{1}{q}}\|\Lambda_k^\lambda\|_{L^p\rightarrow L^q},\]
 with $|A_\omega|=|B_\omega|^{-1/2}$. As $B_\omega$ is non-degenerate for all $\omega$ and the function $|B_\omega|$ is continous with regards to $\omega$ on the unit sphere, we can assume $|A_\omega|\sim 1$, so from (\ref{f-pqbound}) in last Lemma \ref{l-pq bound}, we have
\beq\label{f-Pi bound}
 \|\Pi_k^{\lambda\omega}\|_{L^p\rightarrow L^q}\lesssim \lambda^{n(\frac{1}{p}-\frac{1}{q}-1)}(2k+n)^{\phi(\frac{1}{p}-\frac{1}{2})+\phi(\frac{1}{2}-\frac{1}{q})}.
\eeq
Denote by\footnote{In abuse of notation.} $\langle\cdot,\cdot\rangle$ the dual action of two functions respectively in two dual $L^p$ and $L^{p'}$ spaces or mix-norm spaces on $G$, i.e.,
\[\langle f,g\rangle= \iint_G \bar{f} g dVdZ.\]
By the formula (\ref{restriction operator formula}) of the restriction operator $\ca{P}_\mu^m$ in Theorem \ref{t-spectral decomposition}, changing integral orders, and then using orderly the H\"{o}der inequality, $L^p \rightarrow L^q$ bound (\ref{f-Pi bound}) of $\Pi_k^{\mu_k\omega}$ , Cauchy-Schwartz inequality, Minkovski inequality (glancing at the exponent $p,q'\leq 2$), and finally the Tomas-Stein theorem, we get for any $f,g\in \ca{S}(G)$,
\begin{align*}
\Big|\langle\mathcal{P}_\mu^mf,g\rangle\Big|&\lesssim \sum_{k=0}^\infty\mu_k'\mu_k^{n+d-1}\int_{S^{d-1}}\bigg|\Big\langle(\Pi_k^{\mu_k\omega}\circ\fr{F}_\fr{z})f(V,\mu_k\omega),g(V,Z)e^{i\mu_{k}\omega(Z)}\Big\rangle\bigg| d\sigma(\omega)\\
&\lesssim \sum_{k=0}^\infty\mu_k'\mu_k^{n+d-1}\int_{S^{d-1}}\|(\Pi_k^{\mu_k\omega}\circ\fr{F}_\fr{z})f(V,\mu_k\omega)\|_{L^q_v} \|\fr{F}_\fr{z}g(V, \mu_k\omega)\|_{L^{q'}_v} d\sigma(\omega)\\
&\lesssim \sum_{k=0}^\infty \mu_k'\mu_k^{n+d-1}\|\Pi_k^{\mu_k\omega}\|_{L^p\rightarrow L^q}\|\fr{F}_\fr{z}f(V,\mu_k\omega)\|_{L^p_vL^2_\omega}\|\fr{F}_\fr{z}g(V, \mu_k\omega)\|_{L^{q'}_vL^2_\omega}\\
&\lesssim \sum_{k=0}^\infty \mu_k'\mu_k^{n+d-1}\|\Pi_k^{\mu_k\omega}\|_{L^p\rightarrow L^q}\|\mu_k^{-d}f(V,\mu_k^{-1}Z)\|_{L^r_zL^p_v} \|\mu_k^{-d}g(V,\mu_k^{-1}Z)\|_{L^r_zL^{q'}_v} \\
&\lesssim \sum_{k=0}^\infty \mu_k'\mu_k^{n+d(1-\frac{2}{r'})-1}\|\Pi_k^{\mu_k\omega}\|_{L^p\rightarrow L^q}\|f\|_{L^r_zL^p_v} \|g\|_{L^r_zL^{q'}_v}\\
&\lesssim \sum_{k=0}^\infty \mu_k'\mu_k^{n(\frac{1}{p}-\frac{1}{q})+d(\frac{1}{r}-\frac{1}{r'})-1}(2k+n)^{\phi(\frac{1}{p}-\frac{1}{2})+\phi(\frac{1}{2}-\frac{1}{q})}\|f\|_{L^r_zL^p_v} \|g\|_{L^r_zL^{q'}_v}.
\end{align*}
By duality, we have proved the bound (\ref{f-series bound}) in the theorem.
\end{proof}

\subsection{$\mu$-Dependent Bound for Two Special Classes of $m$}
Now, with the series bound control (\ref{f-series bound}) associated to general proper functional calculus in Theorem \ref{t-series bound}, we are going to get more sophisticated $\mu$-dependent control for two special cases of functionals $(\mathcal{L}^\alpha+(-\Delta_\fr{z})^\beta)^\gamma$ and $(1+\mathcal{L}^\alpha+(-\Delta_\fr{z})^\beta)^\gamma$ with $\alpha, \beta>0, \gamma\neq0$. In short, we come to prove our main results: Theorem \ref{mt1} and Theorem \ref{mt2}.

\begin{flushleft}\textbf{Proof of Theorem \ref{mt1}:}\end{flushleft}
\begin{proof}
 For theorem \ref{mt1}, we consider operators $(\mathcal{L}^\alpha+(-\Delta_\fr{z})^\beta)^\gamma$, associated to $m(a,b)=(a^\alpha+b^\beta)^\gamma$. Then we have the following easy estimates for $\mu_k$, the solution $\lambda$ of equation $\big((2k+n)\lambda\big)^\alpha + \lambda^{2\beta}=\mu^{\frac{1}{\gamma}}$,
 \beq\label{estimate of mu_k}
 \left\{
 \begin{array}{cl}
    \mu_k & <  \qquad\, \min\{\mu^{\frac{1}{2\beta\gamma}},(2k+n)^{-1}\mu^{\frac{1}{\alpha\gamma}}\},\\
    |\mu_k'| &\sim_{\alpha,\beta,\gamma} ~ \mu^{-1}\mu_k.
 \end{array}
 \right.\eeq
 so from (\ref{f-series bound}) in Theorem \ref{t-series bound}, we have
\begin{align}\label{p-two parts}
  C_\mu^m & \lesssim \sum_{k=0}^\infty \mu^{-1}\, \mu_k^{n(\frac{1}{p}-\frac{1}{q})+d(\frac{1}{r}-\frac{1}{r'})}(2k+n)^{\phi(\frac{1}{p}-\frac{1}{2})+\phi(\frac{1}{2}-\frac{1}{q})}\nonumber\\
  &\lesssim \Big(\sum_{2k+n\leq \mu^{\frac{1}{\gamma}(\frac{1}{\alpha}-\frac{1}{2\beta})}}+\sum_{2k+n\geq \mu^{\frac{1}{\gamma}(\frac{1}{\alpha}-\frac{1}{2\beta})}}\Big)\ldots\nonumber\\  &=I_1+I_2 \nonumber\\
  &=I.
  \end{align}
We consider in two cases\footnote{The two cases can also be divided into several more detailed cases, see table in first term of the remark following Theorem \ref{mt1}.} :
\begin{itemize}
\item[\emph{Case 1.}]
When $\mu^{\frac{1}{\gamma}(\frac{1}{\alpha}-\frac{1}{2\beta})}\leq 1$.\\
In this Case, the first term in (\ref{p-two parts}) can be discarded, so after inserting the estimate (\ref{estimate of mu_k}), we have
\begin{align*}
  I&=I_2 \\
  &=\mu^{-1+\frac{1}{\alpha\gamma}[n(\frac{1}{p}-\frac{1}{q})+d(\frac{1}{r}-\frac{1}{r'})]}\\
  &\quad\times\sum_{2k+n\geq\mu^{\frac{1}{\gamma}(\frac{1}{\alpha}-\frac{1}{2\beta})}} (2k+n)^{\phi(\frac{1}{p}-\frac{1}{2})+\phi(\frac{1}{2}-\frac{1}{q})-[n(\frac{1}{p}-\frac{1}{q})+d(\frac{1}{r}-\frac{1}{r'})]}\\
  &\lesssim \mu^{\frac{1}{\alpha\gamma}[n(\frac{1}{p}-\frac{1}{q})+d(\frac{1}{r}-\frac{1}{r'})]-1}\\
  &=\mu^B.
\end{align*}
Actually, in order to derive the last inequality, it suffices to check the exponent of the power series. First, we denote\footnote{$p_*$ is a critical point as $\frac{1}{p_\ast}-\frac{1}{2}=s^\ast$.}  $p_\ast=2\frac{2n+1}{2n+3}$, and the exponent $\nu=\phi(\frac{1}{p}-\frac{1}{2})+\phi(\frac{1}{2}-\frac{1}{q})-[n(\frac{1}{p}-\frac{1}{q})+d(\frac{1}{r}-\frac{1}{r'})]$. Using $1\leq r \leq 2\frac{d+1}{d+3}$, we check it in four cases corresponding to the piecewise function $\phi$ in (\ref{phi}):\\
\begin{itemize}
\item[a.]
  $p\leq p_\ast, q\geq p_\ast'$.\\
  $\nu=-1-d(\frac{1}{r}-\frac{1}{r'})\leq -1-\frac{2d}{d+1}\leq -2 < -1.$
\item[b.]
  $p\leq p_\ast, q\leq p_\ast'$.\\
  $\nu=-(n+\frac{1}{2})(\frac{1}{2}-\frac{1}{q})-\frac{1}{2}-d(\frac{1}{r}-\frac{1}{r'})\leq -\frac{1}{2}-\frac{2d}{d+1}\leq -\frac{3}{2}<-1.$
\item[c.]
  $p\geq p_\ast, q\geq p_\ast'$.\\
  This case is equivalent to item b. .
\item[d.]
  $p\geq p_\ast, q\leq p_\ast'$.\\
  $\nu=-(n+\frac{1}{2})(\frac{1}{p}-\frac{1}{q})-d(\frac{1}{r}-\frac{1}{r'})\leq -(n+\frac{1}{2})(\frac{1}{p}-\frac{1}{q})-\frac{2d}{d+1}\leq -1$, and
  $\nu<-1$ unless $d=1,r=1,p=q=2$, which is just the bad endpoint case on the Heisenberg group.
\end{itemize}

\item[\emph{Case 2.}]
When $\mu^{\frac{1}{\gamma}(\frac{1}{\alpha}-\frac{1}{2\beta})}> 1$.\\
In this case, we can assume $\mu^{\frac{1}{\gamma}(\frac{1}{\alpha}-\frac{1}{2\beta})}> n$ and need to estimate both of the two terms in (\ref{p-two parts}): after inserting the estimate (\ref{estimate of mu_k}), we have
\begin{align*}
 I_1&=\mu^{-1+\frac{1}{2\beta\gamma}[n(\frac{1}{p}-\frac{1}{q})+d(\frac{1}{r}-\frac{1}{r'})]}\sum_{2k+n\leq\mu^{\frac{1}{\gamma}(\frac{1}{\alpha}-\frac{1}{2\beta})} }(2k+n)^{\phi(\frac{1}{p}-\frac{1}{2})+\phi(\frac{1}{2}-\frac{1}{q})}\\
    &\lesssim \mu^{\frac{1}{2\beta\gamma}[n(\frac{1}{p}-\frac{1}{q})+d(\frac{1}{r}-\frac{1}{r'})]+\frac{1}{\gamma}(\frac{1}{\alpha}-\frac{1}{2\beta})[\phi(\frac{1}{p}-\frac{1}{2})+\phi(\frac{1}{2}-\frac{1}{q})+1]-1}\\
  &=\mu^A.\\
  I_2&=\mu^{-1+\frac{1}{\alpha\gamma}[n(\frac{1}{p}-\frac{1}{q})+d(\frac{1}{r}-\frac{1}{r'})]}\\
  &\quad\times\sum_{2k+n\geq\mu^{\frac{1}{\gamma}(\frac{1}{\alpha}-\frac{1}{2\beta})} }(2k+n)^{\phi(\frac{1}{p}-\frac{1}{2})+\phi(\frac{1}{2}-\frac{1}{q})-[n(\frac{1}{p}-\frac{1}{q})+d(\frac{1}{r}-\frac{1}{r'})]},\\
  &\text{\;checking the convergence of the series as before in \emph{Case 1}, then}\\
  &\lesssim \mu^{\frac{1}{\alpha\gamma}[n(\frac{1}{p}-\frac{1}{q})+d(\frac{1}{r}-\frac{1}{r'})]+\frac{1}{\gamma}(\frac{1}{\alpha}-\frac{1}{2\beta})\{\phi(\frac{1}{p}-\frac{1}{2})+\phi(\frac{1}{2}-\frac{1}{q})-[n(\frac{1}{p}-\frac{1}{q})+d(\frac{1}{r}-\frac{1}{r'})]+1\}-1}\\
  &=\mu^A.
\end{align*}
Here, we need to check the exponent of the series for $I_1$. We denote the exponent by $\nu_1=\phi(\frac{1}{p}-\frac{1}{2})+\phi(\frac{1}{2}-\frac{1}{q})$, then $\nu_1>-1$, as the worst case is that $p\rightarrow p_*$ and $q\rightarrow p_*'$, then $\nu_1=-s^*>-1$.
\end{itemize}
Then from the formulas of the exponents in (\ref{exponent}),  (\ref{f-mt1}) and hence Theorem (\ref{mt1}) is proved.
\end{proof}

\begin{flushleft}\textbf{Proof of Theorem \ref{mt2}:}\end{flushleft}
\begin{proof}
For theorem \ref{mt2}, we consider operators $(1+\mathcal{L}^\alpha+(-\Delta_\fr{z})^\beta)^\gamma$, associated to $m(a,b)=(1+a^\alpha+b^\beta)^\gamma$. We may assume $\gamma <0, \, \mu \in (0,1)$ and $\mu_k$ is the solution of equation $1+\big((2k+n)\lambda\big)^\alpha+\lambda^{2\beta}=\mu^{\frac{1}{\gamma}}$, which is easily seen to be strictly decreasing relative to $\mu$. We naturally consider two boundary cases
\[
\left\{
\begin{array}{cl}
 \mu\rightarrow 0+ & \mu_k\rightarrow \infty, \\
 \mu\rightarrow 1- & \mu_k\rightarrow 0+.
\end{array}\right.\]
First we have similar bound  $\mu_k \leq \min\{(\mu^{\frac{1}{\gamma}}-1)^{\frac{1}{2\beta}},(2k+n)^{-1}(\mu^{\frac{1}{\gamma}}-1)^{\frac{1}{\alpha}}\}$, or given in specific cases,
\beq\label{estimate of mu_k II}\mu_k \lesssim_{\gamma} \left\{\begin{array}{cl}
                   \min\{\mu^{\frac{1}{2\beta\gamma}},(2k+n)^{-1}\mu^{\frac{1}{\alpha\gamma}}\} & \mu\rightarrow 0+, \\
                   \min\{(1-\mu)^{\frac{1}{2\beta}},(2k+n)^{-1}(1-\mu)^{\frac{1}{\alpha}}\}\quad & \mu\rightarrow 1-.
                 \end{array}\right.
\eeq
By the decreasing of $\mu_k$ corresponding to not only $\mu$ but also $k$, we also have
 \beq\label{estimate of mu_k'}|\mu_k'|\backsim \left\{\begin{array}{cl}
                      \mu^{-1}\mu_k & \mu\rightarrow 0+, \\
                      (1-\mu)^{-1}\mu_k  & \mu \rightarrow 1-.
                    \end{array}\right.
 \eeq
From the last two estimates of $\{\mu_k,\mu_k'\}$, which is similar to that in the proof of Theorem \ref{mt1}, we can repeat the proof there, and so similar is the form of the conclusions in two main theorems.
For case $\mu\rightarrow 0+$, the estimate of $\{\mu_k,\mu_k'\}$  is absolutely the same as that in Theorem \ref{mt1}, so is the proof process. For case $\mu\rightarrow 1-$, the process is similar.
By (\ref{f-series bound}) in Theorem \ref{t-series bound} and (\ref{estimate of mu_k II},\ref{estimate of mu_k'}), we have
         \begin{align}\label{two parts II}
          C_\mu^m
          &\lesssim\sum_{k=0}^\infty(1-
          \mu)^{-1}\mu_k^{n(\frac{1}{p}-\frac{1}{q})+d(\frac{1}{r}-\frac{1}{r'})}(2k+n)^{\phi(\frac{1}{p}-\frac{1}{2})+\phi(\frac{1}{2}-\frac{1}{q})}\nonumber\\
          &\lesssim \Big(\sum_{2k+n\leq (1-\mu)^{\frac{1}{\alpha}-\frac{1}{2\beta}}}+\sum_{2k+n\geq (1-\mu)^{\frac{1}{\alpha}-\frac{1}{2\beta}}}\Big)\ldots\nonumber\\
          &=I_1+I_2\nonumber\\
          &=I.
         \end{align}
Again we discussed it in two cases:
\begin{itemize}
\item[\emph{Case 1.}]
When $(1-\mu)^{\frac{1}{\alpha}-\frac{1}{2\beta}}\leq 1$.\\
Still the first term in (\ref{two parts II}) can be omitted, then
\begin{align*}
  I&=I_2 \\
  &=(1-\mu)^{-1+\frac{1}{\alpha}[n(\frac{1}{p}-\frac{1}{q})+d(\frac{1}{r}-\frac{1}{r'})]}\\
  &\quad\times\sum_{2k+n\geq(1-\mu)^{\frac{1}{\alpha}-\frac{1}{2\beta}} } (2k+n)^{\phi(\frac{1}{p}-\frac{1}{2})+\phi(\frac{1}{2}-\frac{1}{q})-[n(\frac{1}{p}-\frac{1}{q})+d(\frac{1}{r}-\frac{1}{r'})]}\\
  &\text{checking the convergence of the series, then}\\
  &\lesssim (1-\mu)^{\frac{1}{\alpha}[n(\frac{1}{p}-\frac{1}{q})+d(\frac{1}{r}-\frac{1}{r'})]-1}\\
  &=(1-\mu)^D.
\end{align*}
The exponent of the series is the same as that in the proof of Theorem \ref{mt1}, i.e., it equals $\nu$ and $<-1$, wiping off the bad endpoint in the Heisenberg case.\\
\item[\emph{Case 2.}]
When $(1-\mu)^{\frac{1}{\alpha}-\frac{1}{2\beta}}> 1$.\\
There are also two terms to estimate.
\begin{align*}
 I_1&=(1-\mu)^{-1+\frac{1}{2\beta}[n(\frac{1}{p}-\frac{1}{q})+d(\frac{1}{r}-\frac{1}{r'})]}\sum_{2k+n\leq(1-\mu)^{\frac{1}{\alpha}-\frac{1}{2\beta}} }(2k+n)^{\phi(\frac{1}{p}-\frac{1}{2})+\phi(\frac{1}{2}-\frac{1}{q})}\\
  &\lesssim (1-\mu)^{-1+\frac{1}{2\beta}[n(\frac{1}{p}-\frac{1}{q})+d(\frac{1}{r}-\frac{1}{r'})]+(\frac{1}{\alpha}-\frac{1}{2\beta})[\phi(\frac{1}{p}-\frac{1}{2})+\phi(\frac{1}{2}-\frac{1}{q})+1]}\\
  &=(1-\mu)^C,\\
  I_2&=(1-\mu)^{-1+\frac{1}{\alpha}[n(\frac{1}{p}-\frac{1}{q})+d(\frac{1}{r}-\frac{1}{r'})]}\\
  &\quad\times\sum_{2k+n\geq(1-\mu)^{\frac{1}{\alpha}-\frac{1}{2\beta}} }(2k+n)^{\phi(\frac{1}{p}-\frac{1}{2})+\phi(\frac{1}{2}-\frac{1}{q})-[n(\frac{1}{p}-\frac{1}{q})+d(\frac{1}{r}-\frac{1}{r'})]},\\
  &\text{\;checking the convergence of the series as before, then}\\
  &\lesssim (1-\mu)^{\frac{1}{\alpha}[n(\frac{1}{p}-\frac{1}{q})+d(\frac{1}{r}-\frac{1}{r'})]+(\frac{1}{\alpha}-\frac{1}{2\beta})\{\phi(\frac{1}{p}-\frac{1}{2})+\phi(\frac{1}{2}-\frac{1}{q})-[n(\frac{1}{p}-\frac{1}{q})+d(\frac{1}{r}-\frac{1}{r'})]+1\}-1}\\
  &=(1-\mu)^C.
\end{align*}

\end{itemize}
Then from (\ref{exponent}), (\ref{f-mt2}) and therefore Theorem (\ref{mt2}) is proved.
\end{proof}

\bibliographystyle{amsalpha}
\bibliography{reference}

\end{document}